\definecolor{brass}{rgb}{0.71, 0.65, 0.36}
\theoremstyle{plain}
\newtheorem{theorem}{Theorem}[section]
\newtheorem{lemma}[theorem]{Lemma}
\theoremstyle{definition}
\newtheorem{remark}[theorem]{Remark}
\numberwithin{equation}{section}
\def\Ric{\operatorname{Ric}}
\def\R{\mathbb{R}}
\def\sup{\operatorname{sup}}
\def\inf{\operatorname{inf}}
\def\a{\alpha}
\def\p{\partial}
\def\e{\varepsilon}
\def\n{\nabla}
\def\oo{\infty}
\def\sup{\operatorname{sup}}
\def\supp{\operatorname{supp}}
\def\div{\operatorname{div}}
\def\[{\left[}
\def\]{\right]}
\def\({\left(}
\def\){\right)}
\def\<{\langle}
\def\>{\rangle}
\theoremstyle{plain}
\def\vp{\varphi}
\numberwithin{equation}{section}
\def\dvolg{\mathrm{dVol_g}}
 \def\O{\Omega}
  \def\tab{\;\;\;\;\;\;}
  \def\dmu{e^{-f}\dvolg}
\def\np{\n_{\p^*\Omega}}
\begin{document}
\title{A Spectral Splitting Theorem for the $N$-Bakry \'Emery Ricci tensor}

\author{Wai-Ho Yeung}
\address{Department of Mathematics\\
The Hong Kong University of Science and Technology}
\email{whyeungae@connect.ust.hk}

\begin{abstract}
We extend the spectral generalization of the Cheeger-Gromoll splitting theorem to smooth metric measure space. We show that if a complete non-compact weighted Riemannian manifold $(M,g,e^{-f}\,dvolg)$ of dimension $n\ge 2$ has at least two ends where $f$ is smooth and bounded. If there is some $N\in (0,\infty)$ and $\gamma<\left(\frac{1}{(n-1)\left(1 + \frac{n-1}{N}\right)} + \frac{n-1}{4}\right)^{-1}$ such that $$\lambda_1(-\gamma \Delta_f+\operatorname{Ric}^N_f)\ge 0$$  then $M$ splits isometrically as $\mathbb{R}\times X$ for some complete Riemannian manifold $X$ with $(\operatorname{Ric}_X)^N_f\ge 0$. The estimate can recover the spectral splitting result and its sharp constant $\frac{4}{n-1}$ in \cite{spectral_splitting} and \cite{spectral_splitting_2}.
\end{abstract}

\maketitle

\section{Introduction}
The celebrated Cheeger-Gromoll splitting theorem \cite{CheegerSplitting} asserts that a complete Riemannian manifold $(M,g)$ with nonnegative Ricci curvature and containing a geodesic line must split isometrically. Consequently, a complete manifold with nonnegative Ricci curvature and at least two ends must split.

Following Bakry \'Emery \cite{bakrydiff}\cite{bakryhyper}\cite{bakry2005volume}, given a smooth function $f\in C^\infty(M)$ and induce the smooth metric measure space $(M,g,e^{-f}\dvolg)$. The classical Ricci tensor has been generalized to $N$-dimensional Bakry-\'Emery Ricci curvature as follows $$\Ric^N_f:=\Ric+\mathrm{Hess}(f)-\dfrac{1}{N}df\otimes df$$ 

The comparison theory and the structure theory for the Bakry-\'Emery Ricci tensor lower bound with bounded $f$ has been studied extensively in the past. Most of the comparison theorems for the classical Ricci tensor have also been discovered to be true in the Bakry-\'Emery Ricci tensor, such as volume comparison, mean curvature comparison, splitting theorem, etc. See for example, \cite{lichn1}\cite{lichn2}\cite{wei2akryemeryricci}\cite{fang}.

In \cite{AX24}, Antonelli-Xu proved a sharp volume comparison and Bonnet-Myers theorem for manifolds with positive Ricci curvature in the spectral sense. More recently, Antonelli-Pozzetta-Xu \cite{spectral_splitting} and Catino--Mari--Mastrolia--Roncoroni\cite{spectral_splitting_2} generalized the Cheeger Gromoll splitting theorem in the spectral sense independently. In \cite{spectral_splitting}, following the $\mu$-bubble method from Gromov \cite{gromov4lectures}, they proved the following result.\begin{theorem}\label{thm:APX-result}
    Let $n\geq 2$, $\gamma<\frac{4}{n-1}$, and let $(M^n,g)$ be an $n$-dimensional smooth complete noncompact Riemannian manifold without boundary. Assume that $M$ has at least two ends and satisfies
    \begin{equation}\label{eq:main_condition}
        \lambda_1(-\gamma\Delta+\Ric)\geq0.
    \end{equation}Here $\Ric:M\to\R$ by $$\mathrm{Ric}(x):=\inf_{\substack{v\in T_xM\\ g(v,v)=1}} \mathrm{Ric}_x(v,v).$$
    Then $\Ric\geq0$ on $M$. In particular, the manifold splits isometrically as $(M,g)\cong(\R\times N,\mathrm{d}t^2+g_N)$ for some compact manifold $N$ with nonnegative Ricci curvature.
\end{theorem}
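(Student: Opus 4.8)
The plan is to establish the pointwise conclusion $\Ric\ge0$ on $M$ by the $\mu$-bubble method (Gromov \cite{gromov4lectures}), following Antonelli--Pozzetta--Xu \cite{spectral_splitting}; once this is known, the isometric splitting is the classical Cheeger--Gromoll theorem \cite{CheegerSplitting} (two ends force a line, $\Ric\ge0$ then splits $M\cong\R\times N$, and $N$ is compact since a manifold with $\Ric\ge0$ and at least two ends has exactly two). This is the $f\equiv0$ special case of the paper's theorem, and the same scheme carried out in the weighted category is what would give the general statement. First I would recast the spectral hypothesis using the standard fact that, on a complete noncompact manifold, $\lambda_1(-\gamma\Delta+\Ric)\ge0$ holds if and only if there is a positive supersolution, i.e.\ $u\in C^\infty(M)$ with $u>0$ and
\begin{equation*}
\gamma\,\Delta u \;\le\; \Ric\cdot u \qquad\text{on } M,
\end{equation*}
where $\Ric(x):=\inf_{|v|=1}\Ric_x(v,v)$. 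Fix $x_0\in M$ and a unit vector $v_0\in T_{x_0}M$; the goal is $\Ric_{x_0}(v_0,v_0)\ge0$. Since $M$ has two ends, choose a compact $K\ni x_0$ whose complement has two unbounded components $E_1,E_2$, so that $K$ is a band joining $E_1$ to $E_2$ through a neighbourhood of $x_0$.

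The heart of the argument is a weighted prescribed-mean-curvature bubble in this band. For exponents $a,b$ to be fixed by the optimization below and a prescribed function $h$ blowing up to $+\infty$ near one wall of the band and to $-\infty$ near the other, I minimize
\begin{equation*}
\mathcal{A}(\Omega)=\int_{\p^*\Omega}u^{a}\,d\mathcal{H}^{n-1}\;-\;\int_M(\chi_\Omega-\chi_{\Omega_0})\,h\,u^{a}\,\dvolg
\end{equation*}
over Caccioppoli sets $\Omega$ with $\Omega\triangle\Omega_0$ compactly contained in the interior of the band. Standard $\mu$-bubble theory yields a minimizer whose boundary $\Sigma$ is a nonempty closed hypersurface, smooth off a set of codimension at least $7$, separating $E_1$ from $E_2$; a suitable choice of weight and band forces $\Sigma$ to run near $x_0$ with normal close to $v_0$. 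The first variation gives that the $u^{a}$-weighted mean curvature of $\Sigma$ equals $h$, i.e.\ $H=h+a\,\p_\nu\log u$, and the second variation, after the usual rearrangement, gives for every $\varphi\in C^\infty_c(\Sigma)$ the stability inequality
\begin{equation*}
\int_\Sigma\Big(|A|^2+\Ric(\nu,\nu)-a\,\hess(\log u)(\nu,\nu)+Q(h)\Big)\varphi^2\,u^{a}\;\le\;\int_\Sigma|\n_\Sigma\varphi|^2\,u^{a},
\end{equation*}
with $Q(h)$ collecting the terms in $h$, $h^2$ and $\p_\nu h$.

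Into this I would insert the test function $\varphi=u^{b}|_\Sigma$ (no cutoff is needed, $\Sigma$ being closed): expand $\Delta_\Sigma(u^{b})$ in terms of $\Delta u$, $\hess(\log u)(\nu,\nu)$ and $h\,\p_\nu u$, integrate by parts on the right-hand side, bound $|A|^2\ge\frac{H^2}{n-1}$ and substitute $H=h+a\,\p_\nu\log u$ on the left, and finally use $\gamma\,\Delta u\le\Ric\cdot u$ to absorb the $\Delta u$-terms against $\Ric(\nu,\nu)$. Optimizing $a$ and $b$, the whole inequality closes exactly at $\gamma=\frac{4}{n-1}$ --- the critical value of the resulting quadratic in $b$ weighted by the trace constant $\frac1{n-1}$. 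Hence for $\gamma<\frac{4}{n-1}$ one obtains a strict inequality, contradicting stability of $\Sigma$, unless $\Ric_{x_0}(v_0,v_0)\ge0$ after all; in the borderline analysis one moreover reads off $A\equiv0$ and $\Ric(\nu,\nu)\equiv0$ along $\Sigma$, the rigidity that underlies the Cheeger--Gromoll step. With $\Ric\ge0$ on $M$ established, Cheeger--Gromoll completes the proof.

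I expect the real difficulties to be two. First, the sharp-constant bookkeeping: $a$ and $b$ must be chosen so that the many cross terms among $\n\log u$, $h$ and $A$ cancel against one another rather than against crude bounds, and making the inequality close \emph{precisely} at $\frac{4}{n-1}$ --- not at a weaker constant --- is the delicate core. It is also exactly the point where, in the weighted generalization, the $-\frac1N\,df\otimes df$ term enters: a single Cauchy--Schwarz splitting the weighted mean curvature into its $H$-part and $\p_\nu f$-part with weights $n-1$ and $N$ converts the trace constant $\frac1{n-1}$ into $\frac1{(n-1)(1+(n-1)/N)}$, producing the threshold in the theorem. Second, the geometric-measure-theory input: that the weighted $\mu$-bubble exists, is regular enough, can be localized near an arbitrary prescribed $(x_0,v_0)$, and that its stability can be exploited across the (codimension $\ge7$) singular set --- which in general requires a dimension restriction, or a generic-regularity/dimension-reduction argument in the spirit of Schoen--Yau and its higher-dimensional refinements.
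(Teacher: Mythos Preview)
Your overall strategy---$\mu$-bubble with weight $u^\gamma$, stability inequality, test by a power of $u$, and close the algebra at $\gamma=\tfrac{4}{n-1}$---is correct in spirit, but the endgame you describe has a genuine gap.

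You aim to conclude $\Ric_{x_0}(v_0,v_0)\ge0$ directly from the stability inequality, and you claim a ``suitable choice of weight and band forces $\Sigma$ to run near $x_0$ with normal close to $v_0$''. Neither step works. First, there is no mechanism in the functional $\mathcal A$ that prescribes the \emph{direction} of the normal of the minimizer; forcing $\Sigma$ near a point is already delicate, and forcing its normal near a given $v_0$ is not achievable by these means. Second, and more fundamentally, after you substitute $\gamma\Delta u\le \Ric\cdot u$ the term $\Ric(\nu,\nu)$ is \emph{absorbed} into the spectral remainder; what survives with a sign on $\Sigma$ is $u^{-2}|\nabla u|^2$, not $\Ric(\nu,\nu)$. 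So the stability argument does not contradict any hypothesis on $\Ric$ at $x_0$---it contradicts the assumption $|\nabla u_0|(x_0)>0$. The correct route is: show $|\nabla u_0|(x)=0$ for every $x$, conclude $u_0$ is constant, and then read off $\Ric\ge0$ from the equation $-\gamma\Delta u_0+\Ric\cdot u_0=0$.

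To make the bubble pass through (a neighborhood of) the arbitrary point $x_0$, you are missing the perturbation step: one replaces $u_0$ by $u_{r,a}=u_0+aw_r$ so that $-\gamma\Delta u_{r,a}+\Ric\cdot u_{r,a}>0$ strictly on $M\setminus B(x_0,r)$ and $\ge -\delta u_{r,a}$ on $B(x_0,r)$. This strict positivity outside the small ball is what forces $\partial^*\Omega_\varepsilon$ to intersect $B(x_0,2r)$ (otherwise the stability inequality is strictly negative), and combined with uniform density estimates yields a lower bound $\int_{\partial^*\Omega_\varepsilon\cap B(x_0,\rho)}u^{-2}|\nabla u|^2\ge\eta>0$ when $|\nabla u_0|(x_0)>0$, contradicting the upper bound $O(\varepsilon+\delta)$ from stability. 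Your outline has neither this perturbation nor the density/area bounds needed to close the limit.

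For comparison: the paper does not prove this theorem directly. It proves the weighted version (Theorem~\ref{thm:main_thm}) by exactly the mechanism above, and then in Section~\ref{sec:Alt proof} recovers the present statement by taking $f$ constant and letting $N\to0^+$, which sends the threshold $\big(\tfrac{1}{(n-1)(1+(n-1)/N)}+\tfrac{n-1}{4}\big)^{-1}$ to $\tfrac{4}{n-1}$.
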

Motivated by Theorem \ref{thm:APX-result} and spectral comparison results for N-Bakry-\'Emery Ricci tensor by Chu-Hao\cite{Chu_BE}, we aim to generalize the result for the N-Bakry-Emery Ricci tensor in the spectral sense by adapting the approach of Antonelli-Porzetta-Xu \cite{spectral_splitting}.\begin{theorem}\label{thm:main_thm}
    Let $(M,g,e^{-f}\dvolg)$ be a smooth complete $n$-dimensional noncompact metric measure space without boundary with $n\ge 2$ and $N,\gamma$ be two constants such that $$N\in(0,+\infty),\tab \gamma< \left(\frac{1}{(n-1)\left(1 + \frac{n-1}{N}\right)} + \frac{n-1}{4}\right)^{-1}$$Furthermore, assuming $M$ has at least two ends and for some function $f\in C^\infty(M)$ such that \begin{equation}\label{eq:main condition}
        \lambda_1(-\gamma\Delta_f+\Ric^N_f)\ge 0
    \end{equation}Here $$\Ric^N_f(x):=\inf_{\substack{v\in T_xM\\ g(v,v)=1}} (\Ric^N_f)_x(v,v) $$denotes the smallest eigenvalue of the $N$-Bakry \'Emery Ricci tensor at $x$. Assume further that $||f||_{0}<\infty$, then $\Ric^N_f\ge 0$.In particular, $M$ splits isometrically as $\R\times X$ for some complete Riemannian manifold $X$ without line and the Euclidean space $\R$. Furthermore, the function $f$ is constant on each $\R$-factor and $X$ has  nonnegative $N$-Bakry \'Emery Ricci tensor. 
\end{theorem}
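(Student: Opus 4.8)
The plan is to follow the $\mu$-bubble method of Antonelli--Pozzetta--Xu \cite{spectral_splitting}, adapted to the weighted setting. The two-ends hypothesis enters through the existence of Riemannian ``bands'' of arbitrarily large width; the spectral hypothesis enters by replacing the weight $e^{-f}$ with a modified weight built from a first eigenfunction of $-\gamma\Delta_f+\Ric^N_f$; and the bound $\gamma<\big(\tfrac{1}{(n-1)(1+\frac{n-1}{N})}+\tfrac{n-1}{4}\big)^{-1}$ is precisely the threshold that makes a certain algebraic inequality — combining $|\mathrm{II}|^2\ge H^2/(n-1)$, the $-\tfrac1N\,df\otimes df$ correction, and a conformal change of test function — go through. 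It suffices to produce a totally geodesic isometric splitting $M\cong\R\times X$ with $f$ constant along $\R$ and with the rigidity that the $\R$-direction $\p_t$ realizes $\inf_{|v|=1}(\Ric^N_f)(v,v)$: since $(\Ric^N_f)(\p_t,\p_t)=0$ on such a splitting, that rigidity yields $\Ric^N_f\ge 0$ on $M$ and $(\Ric_X)^N_f\ge 0$ on $X$, and iterating on $X$ stops with $X$ line-free.

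Since $M$ has at least two ends, fix a compact $K$ such that $M\setminus K$ has two unbounded components $U_1,U_2$. For each $L>0$ choose smooth closed two-sided hypersurfaces $\Sigma_L^{\pm}\subset U_i\cap\{\dist(\cdot,K)>L\}$ separating the corresponding end, let $V_L$ be the compact region they bound (so $\dist(\Sigma_L^+,\Sigma_L^-)\ge 2L$), and fix a smooth width function $\rho_L\colon V_L\to[-L,L]$ with $\rho_L\equiv\mp L$ near $\Sigma_L^{\pm}$ and $|\n\rho_L|\le 2$. Let $u>0$ be a first Dirichlet eigenfunction of $-\gamma\Delta_f+\Ric^N_f$ on a geodesic ball containing $V_L$, with eigenvalue $\lambda\ge\lambda_1(-\gamma\Delta_f+\Ric^N_f)\ge 0$, so that $\Delta_f u=\gamma^{-1}(\Ric^N_f-\lambda)\,u$, and set $\tilde f:=f-\gamma\log u$. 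Fix a strictly decreasing profile $h_0\in C^\infty\big((-L+1,L-1)\big)$ with $h_0\to\pm\infty$ at $\mp(L-1)$ (to be specified below), and among Caccioppoli sets $\Omega$ with $\Omega\triangle\{\rho_L<0\}\Subset\interior V_L$ minimize
\begin{equation*}
\mathcal{A}(\Omega)\ :=\ \int_{\p^*\Omega}e^{-\tilde f}\,d\mathcal{H}^{n-1}\ -\ \int_{V_L}\big(\chi_\Omega-\chi_{\{\rho_L<0\}}\big)\,(h_0\circ\rho_L)\,e^{-\tilde f}\,d\mathcal{H}^{n}.
\end{equation*}
A minimizer $\Omega_L$ exists by the direct method; as $h_0\circ\rho_L\to\pm\infty$ near $\Sigma_L^{\pm}$, a barrier argument confines $\Sigma_L:=\p^*\Omega_L$ to a compact subset of $\interior V_L$ away from $\Sigma_L^{\pm}$, and regularity theory for weighted prescribed-mean-curvature hypersurfaces makes $\Sigma_L$ a smooth closed embedded hypersurface for $2\le n\le 7$ (the higher-dimensional singular set being removed by a generic perturbation, as in \cite{spectral_splitting}). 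The first variation is the prescribed weighted mean curvature equation $H-\langle\n\tilde f,\nu\rangle=h_0\circ\rho_L$ on $\Sigma_L$.

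The second variation at $\Omega_L$ gives, for all $\varphi\in C^\infty(\Sigma_L)$,
\begin{equation*}
\int_{\Sigma_L}|\n_{\Sigma_L}\varphi|^2\,e^{-\tilde f}\ \ge\ \int_{\Sigma_L}\Big(|\mathrm{II}|^2+\Ric(\nu,\nu)+\hess\tilde f(\nu,\nu)+\p_\nu(h_0\circ\rho_L)\Big)\varphi^2\,e^{-\tilde f}.
\end{equation*}
Expand $\hess\tilde f=\hess f-\gamma\,\hess\log u$; write $\Ric(\nu,\nu)+\hess f(\nu,\nu)=\Ric^N_f(\nu,\nu)+\tfrac1N\langle\n f,\nu\rangle^2\ge\Ric^N_f+\tfrac1N\langle\n f,\nu\rangle^2$; decompose the ambient Laplacian to get $\hess\log u(\nu,\nu)=\Delta\log u-\Delta_{\Sigma_L}\log u-H\,\p_\nu\log u$; and substitute $\Delta\log u=\Delta_f\log u+\langle\n f,\n\log u\rangle=\gamma^{-1}(\Ric^N_f-\lambda)-|\n\log u|^2+\langle\n f,\n\log u\rangle$. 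The term $\gamma\cdot\gamma^{-1}\Ric^N_f$ cancels the ambient $\Ric^N_f$, leaving the nonnegative remainder $\Ric^N_f(\nu,\nu)-\Ric^N_f\ge 0$ and a term $+\lambda\ge 0$. Integrate the $\gamma\,\Delta_{\Sigma_L}\log u$ term by parts against $\varphi^2 e^{-\tilde f}$, use $|\mathrm{II}|^2\ge H^2/(n-1)$, eliminate $H$ via $H=h_0\circ\rho_L+\langle\n f,\nu\rangle-\gamma\,\p_\nu\log u$, and complete the square in the remaining first-order quantities. All first-order terms can be absorbed exactly when $\gamma<\big(\tfrac{1}{(n-1)(1+\frac{n-1}{N})}+\tfrac{n-1}{4}\big)^{-1}$ — the summand $\tfrac{n-1}{4}$ arising from the conformal test-function mechanism and the summand $\tfrac{1}{(n-1)(1+\frac{n-1}{N})}$ from the optimal split of the mean-curvature defect between $H$ and $\langle\n f,\nu\rangle$ against $|\mathrm{II}|^2\ge H^2/(n-1)$ and $\tfrac1N\langle\n f,\nu\rangle^2$; letting $N\to 0^+$ recovers the sharp $\tfrac4{n-1}$ of \cite{spectral_splitting},\cite{spectral_splitting_2}. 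Taking $\varphi\equiv 1$ on the closed $\Sigma_L$ leaves $0\ge\int_{\Sigma_L}\big((\Ric^N_f(\nu,\nu)-\Ric^N_f)+\lambda+c_2(h_0\circ\rho_L)^2-c_3|h_0'\circ\rho_L|\big)e^{-\tilde f}$ with $c_2,c_3>0$ depending only on $n,N,\gamma$. Choosing $h_0$ to be a rescaled $\tan$-profile on $(-L+1,L-1)$ makes $c_2(h_0\circ\rho_L)^2-c_3|h_0'\circ\rho_L|$ equal to a negative constant of size $O(L^{-2})$, so in the limit $L\to\infty$ the Jacobi integrand becomes a sum of nonnegative terms of vanishing integral, forcing $\mathrm{II}\equiv 0$, $\lambda=0$, and $\Ric^N_f(\nu,\nu)=\Ric^N_f$ on the limiting $\mu$-bubbles. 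Varying the separating data, these sweep out a totally geodesic foliation of $M$, whence $M\cong\R\times X$; with $\nu=\p_t$ and $\Ric^N_f(\p_t,\p_t)=0$, the rigidity forces $\Ric^N_f\ge 0$ on $M$, $f$ constant along $\R$, and $(\Ric_X)^N_f\ge 0$ on $X$.

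The main obstacle is the quantitative core of the third step: one must optimize simultaneously (i) the exponent in the conformal change of the stability test function (producing $\tfrac{n-1}{4}$), (ii) the Riccati profile of $h_0$ over a band of width tending to infinity, and (iii) the allocation of the defect $h_0\circ\rho_L$ between the mean curvature $H$ (controlled via $|\mathrm{II}|^2\ge H^2/(n-1)$) and the drift term $\langle\n f,\nu\rangle$ (entering via $\tfrac1N\langle\n f,\nu\rangle^2$, producing $\tfrac{1}{(n-1)(1+\frac{n-1}{N})}$), so that the three contributions add to exactly the claimed threshold and not a weaker one, all while tracking the cross terms coming from $\p_\nu\log u$ and $\n_{\Sigma_L}\log u$. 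A secondary subtlety is the borderline case $\lambda_1=0$: extracting the totally geodesic foliation from $\{\Sigma_L\}$ as $L\to\infty$ requires uniform area and curvature bounds on the $\mu$-bubbles away from $\Sigma_L^{\pm}$; it is here, and in invoking the comparison/structure theory for $\Ric^N_f$, that the hypothesis $\|f\|_0<\infty$ is genuinely used.
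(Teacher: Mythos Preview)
Your broad strategy matches the paper's---$\mu$-bubbles weighted by $u^\gamma e^{-f}$, stability inequality, algebraic absorption of the normal-direction terms---but the endgame is genuinely different, and this is where the gap lies. The paper does \emph{not} attempt to pass to a limit of $\mu$-bubbles and extract a totally geodesic foliation. Instead it fixes a single \emph{global} positive solution $u_0$ of $-\gamma\Delta_f u_0+\Ric^N_f\cdot u_0=0$ (from Theorem~\ref{thm:equiv+positive}), picks an arbitrary $x\in M$, perturbs $u_0$ slightly near $x$ (Lemma~\ref{lemma:perturb_new}), and runs a one-parameter family of $\mu$-bubble problems with $\varepsilon\to 0$. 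The stability inequality, combined with uniform density estimates on $\partial^*\Omega_\varepsilon$ near $x$ (Lemma~\ref{lem:DensityEstimatesConvergence}), produces a contradiction unless $|\nabla u_0|(x)=0$. Since $x$ is arbitrary, $u_0$ is constant, hence $\Ric^N_f\ge 0$ pointwise, and the splitting is then quoted from the classical Fang--Li--Zhang theorem (Theorem~\ref{thm:splitting theorem BE}). Your route---let $L\to\infty$, claim the $\Sigma_L$ converge to a totally geodesic leaf, then ``vary the separating data'' to sweep out a foliation---requires compactness and a sweepout argument you do not supply; you flag ``uniform area and curvature bounds on the $\mu$-bubbles'' as a subtlety but give no mechanism, and the paper explicitly notes that the foliation approach is unavailable here precisely because the limiting hypersurface may be noncompact.

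A second imprecision is the algebraic core. The paper's functional carries a specific extra factor $e^{-kf}$ with $k=\tfrac{2}{n-1}$ on the potential term $h_\varepsilon u^\gamma$, chosen to annihilate the cross term $(k-\tfrac{2}{n-1})X f_\nu$ in the normal computation; your functional has no such factor, so the $f_\nu$-terms will not close up as written. After that choice, the remaining $(X,Y)$-quadratic (with $X=h_\varepsilon e^{-kf}$, $Y=u^{-1}u_\nu$) must be bounded above by $-c_0X^2-c_1Y^2$ with the explicit relation $c_1=\tfrac{n-1}{4}\gamma(\tfrac{4}{n-1}-\gamma)-(n-3)^2\gamma^2 c_0$ coming from Theorem~\ref{thm:polynomial estimate}; only then does a final completion of the square in $(f_\nu,Y)$ produce exactly the threshold $\big(\tfrac{1}{(n-1)(1+\frac{n-1}{N})}+\tfrac{n-1}{4}\big)^{-1}$, and one still has to let $c_0\to 0^+$ at the very end. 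The paper's introductory Remark stresses that a naive Young's inequality at this step overshoots and fails to recover the sharp constant, so ``complete the square in the remaining first-order quantities'' is not enough detail to distinguish the correct argument from the one that loses the constant. Finally, the paper uses the test function $\varphi=u^{-\gamma/2}$ rather than $\varphi\equiv 1$; this is what makes the tangential part collapse to $\gamma(\tfrac{\gamma}{4}-1)u^{-2}|\nabla_{\partial^*\Omega}u|^2\le 0$.
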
Note that $\Ric^N_f\in Lip_{loc}(M)$. For a constant $\gamma\ge 0$, we say that $M$ satisfies $\lambda_1(-\gamma\Delta_f+\Ric^N_f)\ge 0$ if any of the following equivalent conditions hold:\begin{itemize}
    \item [(i)] For all $\varphi\in C^1_c(M)$, it holds $\int_M(\gamma|\n\varphi|^2+\Ric^N_f\cdot\varphi^2)e^{-f}\dvolg\geq0$
    \item[(ii)]There exist $\alpha\in (0,1)$ and $u\in C^{2,\alpha}(M)$ such that $u>0$ and $-\gamma\Delta_f u+\Ric^N_f\cdot u\geq0$ on $M$.
\end{itemize}The equivalence of two conditions is readily established on a compact manifold and follows from \cite[section 1]{schoenstable} (see Theorem \ref{thm:equiv+positive}), $(i)$ implies $(ii)$ on non-compact manifolds.

The approach to Theorem \ref{thm:main_thm} is inspired by Antonelli-Pozzetta-Xu \cite{spectral_splitting}. They incorporate the $\mu$-bubbles technique of Gromov\cite{gromov4lectures} and a surface-capturing technique inspired by arguments in \cite{liu20123manifoldsnonnegativericcicurvature}\cite{chu2024kahlermanifoldsnonnegativemixed}\cite{Carlotto_2016}\cite{Chodosh_2018}.

In our setting of Theorem \ref{thm:main_thm}, by Theorem \ref{thm:equiv+positive}, we have a function $u$ with \begin{equation}\label{eq:required equation}
    u>0,\tab -\gamma\Delta_f u+\Ric^N_f\cdot u=0
\end{equation}  

Given that $M$ has at least two ends, an analogue of Zhu's $\mu$-bubble approximation argument produces a hypersurface $\Sigma\subset M$ that locally minimizes the weighted area $\Sigma\mapsto\int_\Sigma u^\gamma e^{-f}$. Since $\Sigma$ might be non-compact in our setting, the foliation argument is no longer available. An approximation scheme for each $x\in M$ gives a weighted minimal hypersurface $\Sigma$ passing through $x$. Then, the stability inequality yields $|\nabla u|=0$ on $\Sigma$. Hence, the arbitrariness of $x$ will imply that $u$ is constant. Thus, $\Ric^N_f\geq0$ and the splitting follows. 
\begin{remark}
The author would like to highlight the differences in methodology between \cite{spectral_splitting} and the estimates presented in \cite{Chu_BE}. Besides the minor difference in rescaling the $\mu$-bubbles by a constant due to the exponential factor, the quantitative estimate in Subsection \ref{subsec: Quantitative Estimate for Quad} is crucial. If directly apply the argument in \cite{Chu_BE}[Page 7] to estimate the normal derivative of $f$, the Young's inequality will lead to an overestimate that $\gamma$ could not recover the sharp constant $\frac{4}{n-1}$ in \cite{spectral_splitting}\cite{spectral_splitting_2}. The quantitative estimate provides a way to deal with $f_\nu$ using the normal derivative of the $\mu$ bubble function $h$ (Check eq(\ref{eq:apply quan estimate})) which leads the possibility to recover the constant.
\end{remark}

\textbf{Organization}. The remaining sections of this paper are structured as follows. We collect some preliminary results in section \ref{sec:Preliminary}. In Section \ref{subsec:f-Lap}, we introduce the $f$-Laplacian, the $N$-Bakry \'Emery Ricci tensor and the Bakry \'Emery Ricci tensor version of splitting theorem. In Section \ref{subsec:spectral}, we modify the spectral result in \cite[Theorem 1]{schoenstable} to fit into our $f$-Laplacian case. In Section \ref{subsec:mu bubble}, we modify the $\mu$-bubble function, the solution perturbation lemma and the density lemma Antonelli-Porzzetta-Xu\cite[section 2]{spectral_splitting}. In Section \ref{subsec: Quantitative Estimate for Quad}, we provide a quantitative estimate for quadratic polynomial to deal with the normal derivative $f_\nu$ in the proof. In Section \ref{sec:main thm}, we prove the main theorem, Theorem \ref{thm:main_thm}. In Section \ref{sec:Alt proof}, we show the Theorem \ref{thm:main_thm} recover the sharp spectral splitting result in the Theorem \ref{thm:APX-result}. 

\textbf{Acknowledgments.} The author would like to thank Gioacchino Antonelli for raising his interest in Ricci's curvature lower bounds in the spectral sense and providing feedback on earlier version of this paper. The author would also like to thank my colleague: Tsz Hong Clive Jr Chan, Chun Szeto and Sung Chit Tai for precious comments on the draft of this paper.
\section{Preliminaries}\label{sec:Preliminary}
In this section, we review some analytical objects, tools and prove some results that will be used throughout the proof of the spectral splitting theorem. We consider $(M,g)$ a Riemannian manifold and $f$ is a smooth function on $M$. It induces a weighted Riemannian manifold triple $(M,g,e^{-f}\dvolg)$, which was introduced by Lichnerowicz \cite{lichn1}\cite{lichn2}.
\subsection{\texorpdfstring{$f$}{}-Laplacian, \texorpdfstring{$N$}{}-Bakry \'Emery Ricci Tensor and splitting theorem }\label{subsec:f-Lap}
The $f$-Laplacian $\Delta_f$ is the differential operator defined by $$\Delta_f u:=e^f\div(e^{-f}\n u)=\Delta u-\<\n u,\n f\>$$It is clear that $\Delta_f$ is a self-adjoint operator with respect to the weighted measure $e^{-f}\dvolg$. Assume $u,v\in C^{2,\alpha}_0(M)$, then we have $$\int_Mu(\Delta_fv)\, e^{-f}\dvolg=-\int_M\<\n u,\n v\>\, e^{-f}\dvolg=\int_Mv(\Delta_fu)\, e^{-f}\dvolg$$Lichnerowicz \cite{lichn1} and Bakry-\'Emery\cite{bakrydiff} introduced a natural generalization of the Ricci tensor$$\Ric_f:=\Ric+\mathrm{Hess}(f)$$More generally, the $N$-Bakry \'Emery Ricci tensor is defined by \begin{equation}\label{def:N-BE-Ricci}
    \Ric^N_f:=\Ric+\mathrm{Hess}(f)-\dfrac{1}{N}df\otimes df
\end{equation}By the definition of the $f$-Laplacian and the classical Bochner formula on Riemannian manifolds, one immediately obtains a Bochner formula for weighted manifolds and the relationship to the $N$-Bakry-\'Emery Ricci tensor.\begin{equation}
    \dfrac{1}{2}\Delta_f|\n u|^2=|\mathrm{Hess}(u)|^2+\<\n u,\n \Delta_f u\>+\Ric_f(\n u, \n u)\ge \dfrac{(\Delta_f u)^2}{N+n}+\<\n u,\n \Delta_f u\>+\Ric^N_f(\n u,\n u)
\end{equation}We recall the classical splitting theorem for the $N$-Bakry \'Emery Ricci tensor proved by Fang,Li and Zhang \cite[Theorem 1.3]{fang}\begin{theorem}\label{thm:splitting theorem BE}
    Let $(M,g)$ be a complete connected Riemannian $n$-dimensional manifold and $f\in C^2(M)$ be a function satisfying that $\Ric^N_f\ge 0$ for some $N>0$, which is not necessary to be an integer. Then $M$ splits isometrically as $X\times \R^\ell$ for some complete Riemannian manifold $X$ without line and the $\ell$-Euclidean space $\R^\ell$. Furthermore, the function $f$ is constant on each $\R^\ell$-factor, and $X$ has a non-negative $(N-\ell)$-dimensional Bakry-\'Emery Ricci curavture.
\end{theorem}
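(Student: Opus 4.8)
The plan is to run the Cheeger--Gromoll argument with the Laplacian replaced by the $f$-Laplacian $\Delta_f$, feeding in the finite-$N$ Bakry--\'Emery mean-curvature comparison in place of the classical one; a point worth stressing at the outset is that \emph{no bound on $f$ is needed}, since the term $-\tfrac1N df\otimes df$ already supplies the comparison (this is exactly where the $N=\infty$ theory needs $f$ bounded and the finite-$N$ theory does not). \emph{Step 1 — an $f$-mean-curvature comparison of synthetic dimension $n+N$.} Along a unit-speed minimizing geodesic, with $m=\Delta r$ and $m_f=\Delta_f r=m-\p_r f$, the classical Riccati inequality $\p_r m\le -\tfrac{m^2}{n-1}-\Ric(\p_r,\p_r)$ together with the elementary bound $\tfrac{m^2}{n-1}+\tfrac{(\p_r f)^2}{N}\ge\tfrac{(m-\p_r f)^2}{n+N-1}=\tfrac{m_f^2}{n+N-1}$ (Cauchy--Schwarz) and $\Ric_f=\Ric^N_f+\tfrac1N df\otimes df$ yields $\p_r m_f\le -\tfrac{m_f^2}{n+N-1}-\Ric^N_f(\p_r,\p_r)$. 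Hence under $\Ric^N_f\ge0$ the Riccati comparison gives $\Delta_f r\le\tfrac{n+N-1}{r}$ in the barrier sense, exactly as on an $(n+N)$-dimensional manifold with $\Ric\ge0$.

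\emph{Step 2 — Busemann functions.} If $M$ has no line we are done with $\ell=0$, $X=M$. Otherwise fix a line $\gamma$ and set $b^\pm(x)=\lim_{t\to+\infty}(\dist(x,\gamma(\pm t))-t)$; the limits exist, $b^\pm$ are $1$-Lipschitz, and by Step 1 they are $f$-superharmonic in the barrier sense (the error $\tfrac{n+N-1}{\dist(\cdot,\gamma(\pm t))}\to0$). The triangle inequality gives $b^++b^-\ge0$ with equality along $\gamma$, so the strong maximum principle for the uniformly elliptic, smooth-coefficient operator $\Delta_f$ (invoking Calabi's barrier argument to cover the lack of smoothness) forces $b^++b^-\equiv0$. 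Then $b:=b^+=-b^-$ is simultaneously $f$-super- and $f$-subharmonic, so $\Delta_f b=0$ and $b\in C^\infty$ by elliptic regularity; and since $|\n b|\le1$ with equality on $\gamma$ while, by the weighted Bochner identity with $\Delta_f b=0$, $\tfrac12\Delta_f|\n b|^2=|\hess b|^2+\Ric^N_f(\n b,\n b)+\tfrac1N\<\n f,\n b\>^2\ge0$ (so $|\n b|^2$ is $f$-subharmonic), the maximum principle gives $|\n b|\equiv1$.

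\emph{Step 3 — Bochner rigidity, splitting, and iteration.} With $|\n b|^2\equiv1$ and $\Delta_f b=0$, the Bochner identity collapses to $0=|\hess b|^2+\Ric^N_f(\n b,\n b)+\tfrac1N\<\n f,\n b\>^2$; all three summands are $\ge0$, hence each vanishes: $\hess b\equiv0$, $\Ric^N_f(\n b,\n b)\equiv0$ (so $\Ric^N_f(\n b,\cdot)\equiv0$ by semidefiniteness), and $\<\n f,\n b\>\equiv0$. Thus $\n b$ is a parallel unit field, so $M$ splits isometrically as $\R\times X$ with $b$ the $\R$-coordinate (completeness of $X$ being automatic), and $\<\n f,\n b\>\equiv0$ says $f$ is constant on the $\R$-factor; the product formula then gives $(\Ric^N_f)_{\R\times X}=0\oplus(\Ric_X)^N_f$, whence $(\Ric_X)^N_f\ge0$. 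Iterating on $X$ and passing to the Euclidean de Rham factor, the process terminates after some $\ell\le n$ steps and produces $M\cong\R^\ell\times X$ with $f$ constant on $\R^\ell$ and $X$ without line.

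\emph{The main obstacle.} Everything above is a faithful transcription of Cheeger--Gromoll once Step 1 is available; the genuinely delicate point is upgrading $(\Ric_X)^N_f\ge0$ to the sharper $(\Ric_X)^{N-\ell}_f\ge0$ — a strictly stronger statement that is invisible to the soft product computation. This is the second place where finiteness of $N$ is used: one must carry through the full equality discussion behind Step 1 (the rigidity in the Riccati/Cauchy--Schwarz comparison along each extracted Euclidean direction, together with the corresponding rigidity in the Bochner identity) and keep track of it through all $\ell$ splittings, so that each split is seen to ``cost'' one unit of the synthetic dimension budget, leaving $X$ with effective dimension $(n-\ell)+(N-\ell)$. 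Making this bookkeeping precise is the heart of the argument; it is carried out in \cite{fang}, and I would invoke that computation rather than reprove it here.
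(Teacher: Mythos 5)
The paper does not prove this statement: it is quoted as a known result of Fang--Li--Zhang \cite[Theorem 1.3]{fang}, so there is no internal proof to compare against. Your argument is sound and is essentially the argument of that reference: Step 1 is the correct finite-$N$ mean-curvature comparison ($\Delta_f r\le \tfrac{n+N-1}{r}$ in the barrier sense, via the Cauchy--Schwarz bound $\tfrac{m^2}{n-1}+\tfrac{(\p_r f)^2}{N}\ge\tfrac{m_f^2}{n+N-1}$ applied to the Riccati inequality), and Steps 2--3 are the Cheeger--Gromoll/Calabi machinery run for $\Delta_f$, with the weighted Bochner identity supplying both $|\n b|\equiv1$ and the rigidity $\hess b\equiv0$, $\<\n f,\n b\>\equiv0$; you are also right that no bound on $f$ is needed here, in contrast to the $N=\infty$ theory. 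The one piece you defer --- upgrading $(\Ric_X)^N_f\ge0$ to $(\Ric_X)^{N-\ell}_f\ge0$ --- is indeed the only refinement not reached by the soft product computation, and deferring it to \cite{fang} is reasonable; note that the surrounding paper never uses that refinement anyway (Theorem \ref{thm:main_thm} only claims nonnegativity of $(\Ric_X)^N_f$ for the cross-section), so your version of the conclusion already suffices for every application made of this theorem in the text.
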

For more comparison geometry results on the Bakry-\'Emery Ricci tensor, we refer to Wei-Wylie's paper \cite{wei2akryemeryricci}.
\subsection{Some general spectral results for the operator \texorpdfstring{$\Delta_f-q$}{}}\label{subsec:spectral}
Let $(M,g)$ be a complete noncompact $n$-dimensional Riemannian manifold and $\dvolg$ be the volume element of $(M,g)$. For $f\in C^\infty(M)$, the triple $(M,g,e^{-f}\dvolg)$ is the associated weighted manifold. Let $q\in C^{\infty}(M)$. Given any bounded domain $D\subset M$, we let $\lambda_1(D)<\lambda_2(D)\le \lambda_3(D)\le \cdots$ be the sequence of eigenvalues of $\Delta_f-q$ that acts on functions that vanish on $\p D$. The usual variational characterization of $\lambda_1(D)$ is \begin{equation}\label{eq:eigenvalue variation}
    \lambda_1(D)=\inf\left\{\int_D(|\n u|^2+qu^2)e^{-f}\dvolg:\mathrm{supp}\, u\subset D,\int_D u^2e^{-f}\dvolg=1\right\}
\end{equation}where $|\n f|^2$ denotes the magnitude of the gradient of $f$ taken with respect to $g$. The following lemma is a consequence of $(\ref{eq:eigenvalue variation})$ and the Unique continuation property of $\Delta_f$\cite{koch}

\begin{lemma}\label{lem: eigenvalue domain comparison}
    If $D,D'$ are connected domains in $ M$ with $D\subset D'$, then $\lambda_1(D)\ge \lambda_1(D')$. If $D'\setminus D\neq \emptyset$, then we have $\lambda_1(D)>\lambda_1(D')$
\end{lemma}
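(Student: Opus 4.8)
The plan is to obtain the monotonicity $\lambda_1(D)\ge\lambda_1(D')$ directly from the variational characterization \eqref{eq:eigenvalue variation}, and then to upgrade it to a strict inequality by a contradiction argument driven by the strong unique continuation property of $\Delta_f-q$.

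\emph{Monotonicity.} Let $u$ be any competitor for $\lambda_1(D)$, that is, $\supp u\subset D$ with $\int_D u^2 e^{-f}\dvolg=1$. Extending $u$ by zero over $D'\setminus D$ yields a function with $\supp u\subset D\subset D'$ and $\int_{D'}u^2 e^{-f}\dvolg=1$, and the Dirichlet integral is unchanged, $\int_{D'}(|\n u|^2+qu^2)e^{-f}\dvolg=\int_D(|\n u|^2+qu^2)e^{-f}\dvolg$. Thus every competitor for $\lambda_1(D)$ produces, after zero-extension, a competitor for $\lambda_1(D')$ with the same value of the Rayleigh functional; taking infima over the respective admissible classes gives $\lambda_1(D')\le\lambda_1(D)$.

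\emph{Strictness.} Suppose, for contradiction, that $\lambda_1(D)=\lambda_1(D')=:\lambda$. Since $D$ is bounded, hence precompact, the infimum in \eqref{eq:eigenvalue variation} for $D$ is attained by some $\phi\in H^1_0(D)$ with $\int_D\phi^2 e^{-f}\dvolg=1$ (so $\phi\not\equiv0$), solving $-\Delta_f\phi+q\phi=\lambda\phi$ weakly on $D$. Its zero-extension $\widetilde\phi$ lies in $H^1_0(D')$ (immediate by approximation, since the zero-extension of a function in $C^\infty_c(D)$ lies in $C^\infty_c(D')$), and by the computation of the previous step $\widetilde\phi$ realizes the infimum defining $\lambda_1(D')=\lambda$. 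Hence $\widetilde\phi$ is a first Dirichlet eigenfunction on $D'$, solving $-\Delta_f\widetilde\phi+q\widetilde\phi=\lambda\widetilde\phi$ weakly on the connected open set $D'$; interior elliptic regularity (recall $f,q\in C^\infty$) makes $\widetilde\phi$ a classical solution there. But $\widetilde\phi$ vanishes identically on the nonempty open set $D'\setminus\overline{D}$, so the strong unique continuation property of $-\Delta_f+q-\lambda$ \cite{koch} forces $\widetilde\phi\equiv0$ on all of $D'$, contradicting $\phi\not\equiv0$ on $D$. Therefore $\lambda_1(D)>\lambda_1(D')$.

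\emph{Main obstacle.} The structural steps — attainment of $\lambda_1(D)$ via Rellich compactness of $H^1_0(D)\hookrightarrow L^2(D)$ for bounded $D$, the inclusion $H^1_0(D)\hookrightarrow H^1_0(D')$ by zero-extension, and interior elliptic regularity — are standard and robust; the genuine input, and essentially the only place where smoothness of $f$ is used beyond the self-adjointness of $\Delta_f$, is the strong unique continuation property for the smooth-coefficient elliptic operator $\Delta_f-q$, which we quote from \cite{koch}. The only minor subtlety is the passage from the stated hypothesis ``$D'\setminus D\ne\emptyset$'' to ``$D'\setminus\overline{D}\ne\emptyset$'', which is what the unique continuation step actually consumes: the two coincide for the regular (precompact, Lipschitz-boundary) domains that occur in the applications, so no generality is lost.
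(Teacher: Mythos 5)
Your proof is correct and follows exactly the route the paper indicates (the paper only asserts the lemma as "a consequence of the variational characterization and unique continuation" without writing out the details): monotonicity by zero-extension of competitors, strictness by showing that equality would force the extended first eigenfunction of $D$ to be an eigenfunction on $D'$ vanishing on an open subset, contradicting unique continuation. Your remark that the argument really consumes $D'\setminus\overline{D}\neq\emptyset$ rather than $D'\setminus D\neq\emptyset$ is a legitimate and worthwhile precision, harmless for the smooth precompact domains used in the paper.
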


We now state the main result of this section which is an analogue of \cite[Theorem 1]{schoenstable} but replace $\Delta$ with $\Delta_f$ 

\begin{theorem}\label{thm:equiv+positive}
    The following conditions are equivalent:\begin{itemize}
        \item [(i)]$\lambda_1(D)\ge 0$ for every bounded domain $D\subset M$ 
        \item [(ii)]$\lambda_1(D)> 0$ for every bounded domain $D\subset M$ 
        \item[(iii)]There exists a positive function $g$ satisfying the equation $\Delta_f g-qg=0$ on $M$
    \end{itemize}
\end{theorem}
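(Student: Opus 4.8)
The plan is to follow the classical Fischer--Colbrie--Schoen argument \cite{schoenstable}, replacing $\Delta$ by the uniformly elliptic operator $\Delta_f=\Delta-\langle\n f,\n\,\cdot\,\rangle$ (which has smooth coefficients since $f\in C^\infty(M)$) and Lebesgue measure by $e^{-f}\dvolg$. I would prove (ii)$\imply$(i), then (i)$\imply$(ii), then (iii)$\imply$(i), then (i)$\imply$(iii). The first implication is trivial. For (i)$\imply$(ii): given a connected bounded domain $D\subset M$, noncompactness of $M$ lets me pick a connected bounded domain $D'$ with $D\subsetneq D'$ (e.g.\ a small neighbourhood of $\overline D$), and Lemma \ref{lem: eigenvalue domain comparison} then gives $\lambda_1(D)>\lambda_1(D')\ge 0$.

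For (iii)$\imply$(i): let $g>0$ solve $\Delta_f g-qg=0$ on $M$, fix a bounded domain $D$ and $\varphi\in C^1_c(D)$, and set $\psi:=\varphi/g\in C^1_c(D)$, so $\varphi=g\psi$. Expanding $|\n\varphi|^2=\psi^2|\n g|^2+2g\psi\langle\n g,\n\psi\rangle+g^2|\n\psi|^2$ and using the identity $2g\psi\langle\n g,\n\psi\rangle=\langle\n g,\n(g\psi^2)\rangle-\psi^2|\n g|^2$, I obtain $|\n\varphi|^2=\langle\n g,\n(g\psi^2)\rangle+g^2|\n\psi|^2$. Integrating against $e^{-f}\dvolg$ and integrating by parts (the function $g\psi^2$ is compactly supported in $D$), $\int_D\langle\n g,\n(g\psi^2)\rangle\,e^{-f}\dvolg=-\int_D g\psi^2\,(\Delta_f g)\,e^{-f}\dvolg=-\int_D q\varphi^2\,e^{-f}\dvolg$. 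Hence $\int_D(|\n\varphi|^2+q\varphi^2)e^{-f}\dvolg=\int_D g^2|\n\psi|^2\,e^{-f}\dvolg\ge 0$, and (i) follows from \eqref{eq:eigenvalue variation}.

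For (i)$\imply$(iii): fix $p\in M$ and an exhaustion by connected bounded domains $\Omega_1\Subset\Omega_2\Subset\cdots$ with $\bigcup_i\Omega_i=M$ and $p\in\Omega_1$. Since (i)$\iff$(ii) gives $\lambda_1(\Omega_i)>0$, the bilinear form $B(u,v)=\int_{\Omega_i}(\langle\n u,\n v\rangle+quv)\,e^{-f}\dvolg$ is coercive on $H^1_0(\Omega_i)$: take a convex combination of $B(u,u)\ge\lambda_1(\Omega_i)\|u\|_{L^2}^2$ and $B(u,u)\ge\|\n u\|_{L^2}^2-\|q\|_{L^\infty(\Omega_i)}\|u\|_{L^2}^2$ (the latter using that $q$ is continuous, hence bounded, on the compact $\overline{\Omega_i}$, where $e^{-f}$ is two-sided bounded as well). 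So the Dirichlet problem $\Delta_f g_i-qg_i=0$ in $\Omega_i$ with $g_i-1\in H^1_0(\Omega_i)$ has a unique solution $g_i$ (write $g_i=1+w_i$, solve $-\Delta_f w_i+qw_i=-q$ by Lax--Milgram, then invoke interior elliptic regularity so that $g_i\in C^\infty(\Omega_i)$). Testing the weak equation $B(g_i,v)=0$ for $v\in H^1_0(\Omega_i)$ against $v=g_i^-:=\max(-g_i,0)\in H^1_0(\Omega_i)$ gives $B(g_i^-,g_i^-)=0$, so $g_i^-\equiv 0$ since $\lambda_1(\Omega_i)>0$; then the Harnack inequality for $\Delta_f-q$ upgrades $g_i\ge 0$ to $g_i>0$ in $\Omega_i$. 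Normalize $g_i(p)=1$. For any compact $K\subset M$ and all large $i$, the Harnack inequality bounds $g_i$ above and below on $K$ by constants independent of $i$, and interior Schauder estimates then bound $\|g_i\|_{C^{2,\alpha}(K)}$; a diagonal subsequence converges in $C^{2,\alpha'}_{loc}(M)$ (for some $\alpha'<\alpha$) to a limit $g$ with $\Delta_f g-qg=0$ on $M$, $g(p)=1$ and $g\ge 0$, hence $g>0$ by Harnack once more. This $g$ furnishes (iii).

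The step I expect to be the main obstacle is (i)$\imply$(iii): one must manufacture the positive Dirichlet solutions $g_i$ on the exhausting domains — coercivity of $B$ on $H^1_0(\Omega_i)$ from the hypothesis $\lambda_1(\Omega_i)>0$ for solvability, and the $g_i^-$-test together with the maximum principle for positivity — and then extract a locally convergent subsequence whose limit is still \emph{strictly} positive. The Harnack inequality for $\Delta_f-q$ is the engine that simultaneously provides the uniform two-sided bounds on $\{g_i\}$ over compact sets and the strict positivity of the limit. Everything else reduces either to a direct application of Lemma \ref{lem: eigenvalue domain comparison} or to the formal integration by parts used for (iii)$\imply$(i).
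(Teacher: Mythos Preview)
Your proof is correct and follows essentially the same Fischer--Colbrie--Schoen strategy as the paper: domain monotonicity for (i)$\Leftrightarrow$(ii), solving normalized Dirichlet problems on an exhaustion with Harnack/Schauder compactness for (i)$\Rightarrow$(iii), and an integration-by-parts identity for (iii)$\Rightarrow$(i). The only cosmetic differences are that for (iii)$\Rightarrow$(i) you use the ground-state substitution $\psi=\varphi/g$ whereas the paper sets $w=\log g$ and applies Young's inequality, and for the existence step you invoke Lax--Milgram coercivity while the paper uses the Fredholm alternative on balls---both pairs of arguments are standard and interchangeable.
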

\begin{proof}
\begin{itemize}
    \item [(i)$\to$(ii)]    It is a consequence of Lemma \ref{lem: eigenvalue domain comparison} since for any bounded domain $D\subset M$ and any point $x_0\in M$, we can choose $R$ large enough so that the ball $B_R(x_0)$ contains $D$ such that $B_{R}(x_0)\setminus D\neq \emptyset$. Then we have $\lambda_1(D)>\lambda_1(B_R(x_0))\ge 0$ by assumption for the last inequality.

\item[(ii)$\to$(iii)]    To prove the existence of a positive solution $g$ of the equation $\Delta_f g-qg=,.$ we fix a point $x_0\in M$. For each $R>0$, we consider the problem\begin{equation}\label{eq:positive lemma}
        \begin{cases}
            \Delta u-\left<\n f,\n u\right>-qu&=0\tab \text{on}\tab B_R(x_0)\\
            u&=1\tab \text{on}\tab \p B_R(x_0)\\
        \end{cases}
    \end{equation} Since $\lambda_1(B_R(x_0))>0$, there is no non-zero solution of $\Delta_f u-qu=0$ on $B_R(x_0)$ with $u=0$ on $\p B_R(x_0)$. Since $\Delta_f-q$ is strictly elliptic with coefficient in $C^\alpha(\overline{B_R(x_0)})$ within $B_R(x_0)$. By Fredholm alternative, implies the existence of a unique solution $v\in C^{2,\alpha}(\overline{B_R(x_0)})$ of \begin{equation}
        \begin{cases}
            \Delta_f v-qv&=q\tab \text{on}\tab B_R(x_0)\\
            v&=0\tab \text{on}\tab \p B_R(x_0)\\
        \end{cases}
    \end{equation}It follows that $u=v+1$ is the unqiue solution of $(\ref{eq:positive lemma})$

    After showing the existence, the positivity argument follows exactly the same in \cite[Theorem 1 (ii)$\to$(iii)]{schoenstable}, using strong maximum principle, Harnack inequality and standard elliptic theory to estimate the derivative. Those principles are valid for any weighted manifolds, as the density factor $e^{-f}$ is smooth and bounded. We will continue the proof below.

    To show $u>0$ on $B_R(x_0))$. We show it is non-negative first. Denote $u^-=\min\{u,0\}$ and $\O=\{x:B_R(x_0):u<0\}$. Clearly $u^-=0$ in the $\p^*\Omega$. By \ref{lem: eigenvalue domain comparison} and the assumption, we have $\lambda_1(\O)>0$. It implies $u^-$ is a trivial solution. Since $u\ge 0$ in $B_R(x_0)$. By strong maximum principle, we have $u>0$ in $B_R(x_0)$.

    To extend the solution globally. Take $g_R(x)=u(x_0)^{-1}u(x)$ for $x\in M$. Trivially, it satisfies \begin{equation}
        \begin{cases}
            \Delta_f g_R -qg_R&=0\tab \text{on $B_R(x_0)$}\\
            g_R(x_0)=1,\tab g_R&>0\tab \text{on $B_R(x_0)$}
        \end{cases}
    \end{equation}For any compact set $K$, take $\sigma>0$ such that $K\subset B_\sigma(x_0)$, then for any $R>4\sigma$, $g_R$ are well-defined in $B_{2\sigma}(x_0)$. As $B_{2\sigma}(x_0)$ are compact and $f\in C^{\infty}(M)$, $|\n f|$ is bounded in $B_{2\sigma}(x_0)$. Therefore by Harnack inequality\cite[Corollary 8.21]{GT}, there exists a constant $C$ depends on $B_\sigma(x_0)$ and $B_{2\sigma}(x_0)$ (independent of $R$). Such that for any $z\in B_{\sigma}(x_0)$ \begin{equation}
        g_R(z)\le\sup_{z\in B_{\sigma}(x_0)}g_R(z)\le C  \inf_{z\in B_{\sigma}(x_0)}g_R(z)\le C\tab(g_R(x_0)=1)
    \end{equation}By Schauder estimate \cite[Corollary 6.3]{GT}, the $C^{2,\alpha}$ norm of $g_R$ are also uniformly bounded in $B_\sigma(x_0)$. Therefore, by compact embedding of Holder space \cite[Lemma 6.33]{GT}, there exists a subsequence such that converging to a $C^{2,\beta}$ limit solution $g$ where $0<\beta<\alpha$. Therefore,we have $g_R\to g$ in $C^{2,\beta}_{\mathrm{loc}}(M)$ up to subsequence. We obtain the solution $\Delta_f g-qg=0$ and $g(x_0)=1$ with $g\ge 0$. Follow by strict maximum principle, we have it is a positive solution on $M$.

    \item[(iii)$\to$(i)] If $g>0$ satisfies $\Delta_f g-qg=0$ on $M$, we define a new function $w=\log g$. We now calculate 
    \begin{equation}\label{eq:equiv lemma iii to i}
        \Delta_f w=q-|\n w|
    \end{equation}Let $h$ be any function with compact support on $M$, multiplying $(\ref{eq:equiv lemma iii to i})$ by $h^2$ and integrating by part with respect to $e^{-f}\dvolg$, we obtain \begin{equation}\label{thm: equiv lem iii to i 2}
        -\int_M qh^2 \dmu+\int_M |\n w|^2 h^2\dmu=2\int_M h\<\n h,\n w\>\dmu
    \end{equation}Applying Cauchy-Schwarz inequality and Young's inequality, we have \begin{equation}\label{eq: thm 2 (iii) to (i) 3}
        2|h|\<\n h,\n w\>\le 2|h||\n h||\n w|\le h^2|\n w|^2+|\n h|^2
    \end{equation}Putting $(\ref{eq: thm 2 (iii) to (i) 3})$ to $(\ref{thm: equiv lem iii to i 2})$, and for any $D$ a bounded domain and $f$ is any function with support in $D$, we have \begin{equation}
        0\le \int_M qh^2\dmu+\int_M|\n h|^2\dmu
    \end{equation} By the variational characterization (\ref{eq:eigenvalue variation}), we have $\lambda_1(D)\ge 0$. This finishes the proof of Theorem \ref{thm:equiv+positive}. 
\end{itemize}
\end{proof}
\subsection{Results related to construction of \texorpdfstring{$\mu$}{}-bubbles}\label{subsec:mu bubble}
In this subsection, we collect the result with slight modification in the Preliminary of the spectral splitting paper \cite[section 2]{spectral_splitting}.\begin{lemma}\label{lemma:mu_bubble_function}
    For any $\e\in(0,\frac12)$ and $q>0$ there is a smooth function $\bar h^q_\e:(-\frac q \e,\frac q\e)\to\R$ such that 
    \begin{enumerate}
        \item on $(-\frac q\e,-q]\cup[q,\frac q\e)$ we have
            \begin{equation}
                q(\bar h^q_\e)'+(\bar h^q_\e)^2\geq\e^2,
            \end{equation}
        
        \item on $[-q,q]$ we have
            \begin{equation}
                |q(\bar h^q_\e)'+(\bar h^q_\e)^2|\leq C\e,
            \end{equation}
            for a universal constant $C=C(q)=1+\coth^2(\frac 1 2)+2q^{-1}\coth(\frac 1 2)>0$
        
        \item there holds $(\bar h^q_\e)'<0$ on $(-\frac q\e,\frac q\e)$, $\bar h_\e(0)=0$, $\lim_{x\to\pm\frac q\e} \bar h_\e(x)=\mp\infty$,
        
        \item $\bar h_\e\to0$ smoothly as $\e\to0$ on any compact subset of $\R$.
    \end{enumerate}
\end{lemma}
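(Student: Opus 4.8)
The construction rests on the scalar Riccati equation $qh'+h^{2}=\e^{2}$: I would solve it exactly on the two outer intervals and then patch across $[-q,q]$ by a cutoff interpolation. On a half-line $\{x<c\}$ the function $x\mapsto-\e\coth\!\big(\tfrac{\e}{q}(c-x)\big)$ satisfies $qh'+h^{2}\equiv\e^{2}$, is strictly decreasing, and tends to $-\infty$ as $x\uparrow c$; choosing $c=q/\e$ gives $\bar h^{+}(x):=-\e\coth\!\big(1-\tfrac{\e x}{q}\big)$, which is in fact smooth on all of $(-\infty,q/\e)$. Setting $\bar h^{-}(x):=-\bar h^{+}(-x)=\e\coth\!\big(1+\tfrac{\e x}{q}\big)$, this is smooth on $(-q/\e,\infty)$, solves the same equation, is strictly decreasing, and tends to $+\infty$ as $x\downarrow-q/\e$. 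I define $\bar h^{q}_{\e}$ to equal $\bar h^{+}$ on $[q,q/\e)$ and $\bar h^{-}$ on $(-q/\e,-q]$; this already realizes (1) with equality and the correct asymptotics at $\pm q/\e$ demanded in (3).

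Over $[-q,q]$ I would take $\bar h^{q}_{\e}:=\chi\,\bar h^{-}+(1-\chi)\,\bar h^{+}$, where $\chi\colon\R\to[0,1]$ is a fixed smooth cutoff with $\chi\equiv1$ near $-q$, $\chi\equiv0$ near $q$, $\chi(x)+\chi(-x)\equiv1$, and $|\chi'|$ small relative to $q$. Since $\chi$ is locally constant near $\pm q$ and $\bar h^{\pm}$ extend smoothly past $\pm q$, this produces a globally $C^{\infty}$ function on $(-q/\e,q/\e)$ with no infinite-order jet matching needed. The relation $\chi(x)+\chi(-x)\equiv1$ together with $\bar h^{-}(x)=-\bar h^{+}(-x)$ forces $\bar h^{q}_{\e}$ to be odd, so $\bar h^{q}_{\e}(0)=0$; and from
\begin{equation*}
(\bar h^{q}_{\e})'=\chi'\,(\bar h^{-}-\bar h^{+})+\chi\,(\bar h^{-})'+(1-\chi)\,(\bar h^{+})',
\end{equation*}
using $\bar h^{-}-\bar h^{+}=\e\big(\coth(1+\tfrac{\e x}{q})+\coth(1-\tfrac{\e x}{q})\big)>0$, $\chi'\le0$, and $(\bar h^{\pm})'<0$, we get $(\bar h^{q}_{\e})'<0$ throughout $[-q,q]$; combined with the outer pieces this settles (3).

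For the quantitative property (2), since each $\bar h^{\pm}$ solves $q(\bar h^{\pm})'+(\bar h^{\pm})^{2}=\e^{2}$ exactly, a direct computation on $[-q,q]$ gives the identity
\begin{equation*}
q(\bar h^{q}_{\e})'+(\bar h^{q}_{\e})^{2}=\e^{2}+q\,\chi'\,(\bar h^{-}-\bar h^{+})-\chi(1-\chi)\,(\bar h^{-}-\bar h^{+})^{2}.
\end{equation*}
For $x\in[-q,q]$ and $\e\in(0,\tfrac12)$ one has $1\pm\tfrac{\e x}{q}\in[\tfrac12,\tfrac32]$, so $0<\bar h^{-}-\bar h^{+}\le2\e\coth(\tfrac12)$; inserting this bound, $\chi(1-\chi)\le\tfrac14$, the control on $|\chi'|$, and $\e^{2}\le\tfrac\e2$ into the identity yields $|q(\bar h^{q}_{\e})'+(\bar h^{q}_{\e})^{2}|\le C(q)\,\e$ on $[-q,q]$; tracking the constants with $\chi$ chosen so that its slope scales like $q^{-1}$ (or better, on the longest transition region the interval affords) produces $C(q)=1+\coth^{2}(\tfrac12)+2q^{-1}\coth(\tfrac12)$. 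Property (4) is then immediate: a fixed compact $K\subset\R$ lies in $(-q/\e,q/\e)$ once $\e$ is small, on $K\cap[-q,q]$ every $C^{k}$-norm of $\bar h^{q}_{\e}$ is $O(\e)$, and on $K\setminus[-q,q]$ the explicit expression $\bar h^{q}_{\e}(x)=\mp\e\coth(1\mp\tfrac{\e x}{q})$ and all of its $x$-derivatives tend to $0$ uniformly on $K$ as $\e\to0$.

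The step requiring the most care is (2): a crude $\sup$-estimate of $(\bar h^{q}_{\e})^{2}$ on $[-q,q]$ only gives error $O(\sqrt\e)$, so one genuinely needs the cancellation displayed above (which reflects that both outer halves are exact solutions), together with the fact that the mismatch $\bar h^{-}-\bar h^{+}$ is itself of size $O(\e)$ there and a cutoff whose derivative is controlled in terms of $q$, to land on the sharp linear-in-$\e$ bound with the stated constant. Everything else is the elementary theory of the scalar Riccati equation and standard estimates on $\coth$.
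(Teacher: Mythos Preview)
Your construction is exactly the paper's: the paper's proof consists solely of the formula $\bar h^q_\e(x)=\e\,\eta(q^{-1}x)\coth(1+\e q^{-1}x)-\e\,(1-\eta(q^{-1}x))\coth(1-\e q^{-1}x)$ with a cutoff $\eta$ satisfying $\eta|_{(-\infty,-1]}\equiv1$, $\eta|_{[1,\infty)}\equiv0$, $-1\le\eta'\le0$, $\eta(0)=\tfrac12$, which is precisely your $\chi\bar h^{-}+(1-\chi)\bar h^{+}$ with $\chi=\eta(q^{-1}\cdot)$. You have gone further than the paper by actually verifying (1)--(4), and your key identity for (2) and the $\coth(\tfrac12)$ bounds are the right way to obtain the linear-in-$\e$ estimate; the only quibble is that with $|\chi'|\sim q^{-1}$ the term $q\chi'(\bar h^{-}-\bar h^{+})$ contributes a $q$-independent multiple of $\e$ rather than $2q^{-1}\coth(\tfrac12)\,\e$, but this does not affect the lemma or its use.
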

\begin{proof}
    We can take
    \begin{equation}
        \bar h^q_\e(x):=\e\eta(q^{-1}x)\coth(1+\e q^{-1} x)-\e\big(1-\eta(q^{-1} x)\big)\coth(1-\e q^{-1} x)
    \end{equation}
    where $\eta$ is a smooth cutoff function with $\eta|_{(-\infty,-1]}\equiv1$, $\eta|_{[1,+\infty)}\equiv0$, $-1\leq\eta'\leq0$, and $\eta(0)=1/2$. 
\end{proof}

The next lemma provides the functions we will use to perturb the function $u_0$ satisfying the following\begin{equation}
    u>0,\tab -\gamma\Delta_f u+\Ric^N_f\cdot u=0
\end{equation}.
\begin{lemma}\label{lemma:perturb_new}
    Let $(M,g)$ be a smooth complete noncompact Riemannian $n$-manifold. Let $f\in C^\oo(M)$ with $||f||_{0}<\infty$, and form the weighted manifold $(M,g,e^{-f}\dvolg)$. Let $\gamma>0$, $q\in Lip_{loc}(M)$, and assume there exist $\alpha\in (0,1)$, and $u_0\in C^{2,\alpha}(M)$, such that $-\gamma\Delta_f u_0+qu_0=0$, and $u_0>0$ on $M$. Then for any $x\in M$ and $r\in(0,1)$, there exists $w\in C^{2,\alpha}(M)$ such that
    \begin{enumerate}
        \item $-2u_0\leq w<0$ on $M$,

        \item $-\gamma\Delta_f w+ q w>0$ on $M\setminus B(x,r)$.
    \end{enumerate}
\end{lemma}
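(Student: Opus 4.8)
The plan is to produce the perturbation $w$ by solving a Dirichlet problem on a large ball and then gluing to a negative constant outside. First I would fix $x\in M$ and $r\in(0,1)$ and choose $R$ large with $B(x,r)\Subset B(x,R)$. The idea is to look for $w$ of the form $w = -u_0 \cdot \psi$ where $\psi$ is a positive function, chosen so that the inequality in (2) becomes a sign condition on a first-order quantity. Indeed, using $-\gamma\Delta_f u_0 + q u_0 = 0$ one computes
\begin{equation}
    -\gamma\Delta_f(u_0\psi) + q(u_0\psi) = -\gamma u_0 \Delta_f \psi - 2\gamma\langle \nabla u_0,\nabla\psi\rangle = -\gamma u_0\Big(\Delta_f\psi + 2\langle\nabla\log u_0,\nabla\psi\rangle\Big),
\end{equation}
so $w=-u_0\psi$ satisfies (2) on $M\setminus B(x,r)$ precisely when the weighted-type operator $L\psi := \Delta_f\psi + 2\langle\nabla\log u_0,\nabla\psi\rangle$ satisfies $L\psi < 0$ there, i.e.\ $\psi$ is a strict supersolution of $L$ off $B(x,r)$; and (1) asks $0<\psi\le 2$. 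Note $L = e^{f}u_0^{-2}\operatorname{div}(e^{-f}u_0^2\nabla\,\cdot\,)$ is again a drift-Laplacian for the smooth, locally bounded weight $e^{-f}u_0^2$, so all the standard elliptic machinery (maximum principle, Schauder, Harnack) applies locally.

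Next I would construct $\psi$ by the same scheme as in \cite[Section 2]{spectral_splitting}: pick a smooth function $\phi$ on $M$ with $0<\phi\le 1$, $\phi\equiv 1$ on $B(x,r)$, and $L\phi<0$ on $M\setminus B(x,r)$ — for instance take $\phi$ a suitable radial bump in a slightly larger ball $B(x,r')$ and extend by a small positive constant, adjusting so that the second-order term dominates where $\phi$ is non-constant; since everything happens on the fixed compact set $\overline{B(x,r')}$ this is just a one-variable ODE construction as in Lemma \ref{lemma:mu_bubble_function}. Then set $\psi=\phi$ and $w=-u_0\phi$. Shrinking $\phi$ by a constant if needed guarantees $0<\phi\le 1$, hence $-u_0\le w<0$, which is stronger than (1). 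One checks $w\in C^{2,\alpha}(M)$ since $u_0\in C^{2,\alpha}$ and $\phi$ is smooth.

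The main obstacle I anticipate is making the single inequality $L\phi<0$ hold \emph{everywhere} on $M\setminus B(x,r)$ while keeping $\phi$ bounded between two positive constants: on the non-compact region $\phi$ must be (locally) constant, but then $L\phi=0$ there, not $<0$. The fix, exactly as in \cite{spectral_splitting}, is to allow the transition of $\phi$ from $1$ down to a small constant to occur on an annulus $B(x,r')\setminus B(x,r)$ and to arrange, via the explicit profile (a $\coth$-type function rescaled by a small parameter as in Lemma \ref{lemma:mu_bubble_function}), that the radial second derivative term beats the drift and zeroth-order contributions on that annulus — these are controlled since $|\nabla\log u_0|$, $|\nabla f|$ and the geometry are bounded on the compact closure — and then to replace the strict inequality on the outer region by using the density/perturbation trick: instead of $L\phi<0$ literally, one takes $w$ to be the solution of $-\gamma\Delta_f w+qw = \chi$ for a nonnegative bump $\chi$ supported where needed, using that $\lambda_1>0$ on large balls (Theorem \ref{thm:equiv+positive}) to get a sign-definite solvable problem, then passes to a limit as $R\to\infty$ with Harnack/Schauder bounds as in the proof of Theorem \ref{thm:equiv+positive}. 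I would follow whichever of these two routes matches \cite[Section 2]{spectral_splitting} verbatim, since the lemma is stated there as a direct adaptation.
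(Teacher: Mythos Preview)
Your proposal contains a genuine gap: neither of the two ``fixes'' you sketch actually produces a strict inequality on the \emph{entire} noncompact set $M\setminus B(x,r)$. In the multiplicative approach $w=-u_0\psi$, you correctly note that once $\psi$ is constant outside a compact set one only gets $L\psi=0$ there, and no $\coth$-type profile on a finite annulus changes this on the unbounded exterior. In the additive approach, solving $-\gamma\Delta_f w + qw=\chi$ with a nonnegative compactly supported bump $\chi$ gives $-\gamma\Delta_f w + qw=0$ off $\supp\chi$, again not strict. So as written the construction does not yield item~(2).

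The paper's proof (following \cite[Lemma 2.2]{spectral_splitting}) avoids this by perturbing the \emph{potential} rather than fixing a right-hand side: choose $\eta\in \mathrm{Lip}_{loc}(M)$ with $\eta>0$ everywhere, note that $\lambda_1(-\gamma\Delta_f+q+\eta)>0$ on each $\Omega_i\setminus U$ for a smooth $U\subset B(x,r)$ and an exhaustion $\{\Omega_i\}$, and solve the Dirichlet problem
\[
-\gamma\Delta_f w_i + (q+\eta)w_i = 0 \ \text{in }\Omega_i\setminus\overline U,\qquad w_i|_{\partial U}=u_0,\quad w_i|_{\partial\Omega_i}=0.
\]
One shows $0<w_i<u_0$ and $w_i$ increasing in $i$, then passes to a limit $w'\in C^{2,\alpha}_{\rm loc}(M\setminus\overline U)$ via Schauder and Harnack as in Theorem~\ref{thm:equiv+positive}. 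Setting $w:=-w'$ gives
\[
-\gamma\Delta_f w + qw = -\eta w = \eta w' > 0 \quad\text{on all of } M\setminus\overline U\supset M\setminus B(x,r),
\]
the strictness coming for free because $\eta>0$ and $w'>0$ globally. Finally one extends $w$ into $U$ with $-2u_0\le w<0$. The point you were missing is that the right-hand side should be $-\eta w$ (proportional to the unknown) rather than a fixed bump; this is what makes the strict sign propagate to infinity.
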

\begin{proof}
    The argument just a minor modification of \cite[Lemma 2.2]{spectral_splitting}. Let $U\subset B(x,r)$ be a smooth open set, and let $\eta\in \mathrm{Lip_{loc}}(M)$ be a strictly positive function such that $q+\eta\in C^\infty(M)$, by Theorem \ref{thm:equiv+positive}, the positive solution $u_0$ implies $\lambda_1(-\gamma \Delta_f+q)\ge 0$ on $M$, and therefore $\lambda_1(-\gamma\Delta_f+q+\eta)\ge 0$ on $M\setminus U$ as well. Let $\Omega_1\subset \Omega_2\subset\dots\subset M$ be an exhaustion in smooth pre-compact open set. By Theorem \ref{thm:equiv+positive}, we have $\lambda_1(-\gamma\Delta_f+q+\eta)>0$ on $\Omega_i\setminus U$ for every $i$. Then there exists a solution $w_i\in C^{2,\alpha}(\overline{\Omega}\setminus U)$ to the Dirichlet problem\begin{equation}\label{eq:system 1}
        \begin{cases}
        -\gamma\Delta_f w_i+q w_i&=-\eta w_i\tab\text{in $\Omega_i\setminus \overline{U}$}\\
        w_i|_{\p^*\Omega_i}&=0\\
        w_i|_{\p U}&=u_0
    \end{cases}
    \end{equation}Indeed, we can assume $d(\p U,\p^*\Omega_i)>\delta$ for some $\delta>0$. Then consider a cut off $\xi$ with $\supp \xi\subset T_{\delta}(\p U)$. It is equivalent to consider the following Dirichlet Problem \begin{equation}\label{eq:system 2}
        \begin{cases}
        -\gamma\Delta_f (w_i-\xi u_0)+q (w_i-\xi u_0)+\eta (w_i-\xi u_0)&= 0\tab\text{in $\Omega_i\setminus \overline{U}$}\\
        (w_i-\xi u_0)|_{\p^*\Omega_i}&=0\\
        (w_i-\xi u_0)|_{\p U}&=0
    \end{cases}
    \end{equation}By Fredholm alternative \cite[Theorem 6.15]{GT}, $w_i-\xi u_0$ is a trivial solution. From the system $\ref{eq:system 2}$, we can conclude there exist a solution $w_i$ in the system $(\ref{eq:system 1})$.

    To show $w_i\in \O_i\setminus U$ is a positive function, taking $w^-_i:=\min\{w_i,0\}\in \mathrm{Lip}_0(\O_i\setminus U)$ and notice that its boundary vanish. Therefore, we have $0=\int_{\O_i\setminus U} (\gamma|\n w_i^-|^2+q(w^-_i)^2+\eta(w^-_i)^2)e^{-f}\dvolg$. By $\lambda_1(-\gamma\Delta_f+q+\eta)>0$ and the variational characterization (\ref{eq:eigenvalue variation}), we have $w^-_i=0$. We have $w_i\ge 0$, and $w_i>0$ in $\O_i\setminus U$ by strong maximum principle.

    Considering $w_{i+1}-w_i$, we have  $$\begin{cases}
        -\gamma\Delta_f (w_{i+1}-w_i)^-+q (w_{i+1}-w_i)^-&=-\eta (w_{i+1}-w_i)^-\tab\text{in $\Omega_i\setminus \overline{U}$}\\
        (w_{i+1}-w_i)^-|_{\p^*\Omega_i}&=(w_{i+1})^-=0\\
        (w_{i+1}-w_i)^-|_{\p U}&=0
    \end{cases}$$Similarly, by $\lambda_1(-\gamma\Delta_f+q+\eta)>0$ and the variational characterization (\ref{eq:eigenvalue variation}), we have $(w_{i+1}-w_i)^-=0$. We have $w_{i+1}-w_i\ge 0$ on $\O_i\setminus U$. By strong maximum principle, we have $w_{i+1}>w_i$

    Moreover, considering $-\gamma\Delta_f(u_0-w_i)+q(u_0-w_i)=\eta w_i>0$ in $\O_i\setminus U$ for any $i$. Using similar approach, we have $w_i<u_0$ for any $i$

    Fix $K$ be a compact subset in $M\setminus U$. Since it is compact, there exists $i_0>0$ such that for for $i>i_0$, $K\subset\Omega_i\setminus U$. By Schauder's estimate, $w_i$ $C^{2,\alpha}$ norm is bounded above by their boundary condition and $C^0$ norm. Since $0<w_i\le u_i$, we have for any $i\ge i_0$, $||w_i||_{C^{2,\alpha}}\le C||u_0||_{C^0}$. By compact embedding of Holder space \cite[Lemma 6.33]{GT}, there exists a $C^{2,\beta}$ limit solution $w'$ where $0<\beta<\alpha$. Since the above construction is true for any compact set, we can conclude the limit function $w'$ belongs to $C^{2,\beta}_{\mathrm{loc}}(M\setminus U)$. Define $w:=-w'$, we have $-u_0\le w<0$ and $-\gamma\Delta_f w+qw=-\eta w>0$ in $M\setminus \overline{U}\supset M\setminus B(x,r) $. Then extend $w$ in $U$ such that $-2u_0\le w<0$
\end{proof}

The next lemma proves auxiliary density estimates for $\mu$-bubbles.
We refer the reader to \cite{AmbrosioFuscoPallara, MaggiBook} for the classical theory of sets of finite perimeter in the Euclidean space, which naturally extends to the case of Riemannian manifolds. 

A \textit{set of locally finite perimeter} in a complete $n$-dimensional Riemannian manifold $(M,g)$ is a measurable set $E\subset M$ such that the characteristic function $\chi_E$ has locally bounded variation.
Given such a set $E$, the total variation of the weak gradient of $\chi_E$ is called the perimeter measure, and usually denoted by $P(E,\cdot)$. By the well-known structure theorem of De Giorgi, the perimeter is concentrated on the so-called reduced boundary $\p^*E$. More precisely, we have $P(E,\cdot)=\mathcal{H}^{n-1}\llcorner\p^*E$. For ease of notation, for any set of locally finite perimeter $E\subset M$ and for any Borel set $A\subset M$, in this paper we will denote $|\partial^* E \cap A|:= P(E, A)$ and $|\partial^* E| := P(E,M)$.

\begin{lemma}\label{lem:DensityEstimatesConvergence}
Let $(M^n,g)$ be a smooth noncompact complete $n$-dimensional Riemannian manifold, with $n \ge 2$. Let $\gamma\ge 0$, $\e\in(0,\frac12)$, and let $u:M\to (0,\infty)$ be a function of class $C^{2,\alpha}$, for some $\alpha \in (0,1)$. Let $\phi:M\to \R$ be a surjective smooth proper $1$-Lipschitz function such that $\Omega_0:= \phi^{-1}\left((-\infty,0)\right)$ has smooth bounded boundary. Let $\bar h^q_\e$ be given by lemma \ref{lemma:mu_bubble_function}, and let $h^q_\e := c \,\bar h^q_\e \circ \phi$ for some $c,q>0$. Define
\begin{equation*}
    \mathcal P_\e (E) := \int_{\partial^*E} u^\gamma\, e^{-f} - \int \left( \chi_{E}- \chi_{\Omega_0} \right) h_\e u^\gamma\, e^{-\left(\frac{n+3}{n+1}\right)f},
\end{equation*}
for any \textit{admissible} set of locally finite perimeter $E\subset M$, i.e., such that $E \Delta \Omega_0 \Subset \phi^{-1}\left((-q/\e, q/\e)\right)$ up to negligible sets.

Assume that $\Omega_\e$ is a minimizer for $\mathcal P_\e$ with respect to admissible compactly supported variations. Then, up to modify $\Omega_\e$ on a negligible set, the following holds.

\begin{enumerate}
    \item The reduced boundary $\partial^* \Omega_\e$ 
    is an (open) hypersurface of class $C^{2,\alpha'}$, 
    for some $\alpha'\in(0,1)$, and $\partial^* \Omega_\e\setminus \partial^* \Omega_\e$ 
    is a closed set with Hausdorff dimension less than or equal to $n-8$.

    \item 
    Let $A\subset M$ be a precompact open set, and let $\e_0\in(0,1/2)$ such that $\overline{A}\subset \phi^{-1}\left((-1/\e_0, 1/\e_0)\right)$. Then there exist $r_0,C_d \in (0,1)$ depending on $n,\gamma, \|\log u\|_{L^\infty(A)}, g|_{\overline{A}}, \e_0, c,||f||_{0},q$ such that for any $\e<\e_0$ there holds 
    $$C_d \le \frac{|\p^*\Omega_\e \cap B(y,\rho)|}{\rho^{n-1}} \le C_d^{-1},
    $$
    for any $y \in A \cap \p^*\Omega_\e$ with $B(y,2r_0)\Subset A$, and any $\rho<r_0$.
\end{enumerate}
\end{lemma}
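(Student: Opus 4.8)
The plan is to read off from the minimality of $\Omega_\e$ that it is a $\Lambda$-minimizer of a smooth-weight perimeter functional on precompact subsets of $M$, and then run the classical regularity and density machinery for almost-minimizers (as in \cite[Section 2]{spectral_splitting} and \cite{MaggiBook}), keeping track of how the constants depend on the listed data.

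\textbf{Step 1 (from $\mu$-bubble to $\Lambda$-minimizer).} Write $\psi:=u^\gamma e^{-f}$ and $\tilde\psi:=u^\gamma e^{-\frac{n+3}{n+1}f}$; since $u\in C^{2,\alpha}$ with $u>0$ and $f$ is smooth, both lie in $C^{2,\alpha}$, and on $\overline A$ they are pinched between positive constants controlled by $\gamma$, $\|\log u\|_{L^\infty(A)}$ and $\|f\|_0$. Because $\overline A\subset\phi^{-1}\big((-1/\e_0,1/\e_0)\big)$ and $\e<\e_0$, the explicit formula for $\bar h^q_\e$ in Lemma \ref{lemma:mu_bubble_function} gives $\|h^q_\e\|_{L^\infty(A)}\le\Lambda_0(c,q,\e_0)$, hence $\Lambda:=\|h^q_\e\,\tilde\psi\|_{L^\infty(A)}$ is bounded by the listed data, uniformly in $\e<\e_0$; moreover, for $\e<\e_0$ any set agreeing with $\Omega_\e$ outside a ball $B\Subset A$ is still admissible, since $\overline A$ is compactly contained in $\phi^{-1}\big((-q/\e,q/\e)\big)$. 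Comparing $\mathcal P_\e(\Omega_\e)\le\mathcal P_\e(F)$ for such competitors $F$ and bounding the bulk difference by $\Lambda\,|F\Delta\Omega_\e|$ shows that $\Omega_\e$ is a $(\Lambda,r)$-minimizer of the weighted perimeter $E\mapsto\int_{\partial^*E}\psi$ inside any ball $B\Subset A$.

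\textbf{Step 2 (regularity, part (1)).} I would invoke the regularity theory for $\Lambda$-minimizers of elliptic parametric integrands with $C^{1,\alpha}$ density: $\partial^*\Omega_\e$ is an open $C^{1,\alpha'}$ hypersurface and its singular set has Hausdorff dimension $\le n-8$ (empty for $n\le 7$). The first variation of $\mathcal P_\e$ then identifies the classical mean curvature of $\partial^*\Omega_\e$ with $\frac{h^q_\e\tilde\psi}{\psi}-\partial_\nu\log\psi$, a $C^{0,\alpha'}$ function on a $C^{1,\alpha'}$ surface; Schauder estimates applied to the graphical mean-curvature equation upgrade $\partial^*\Omega_\e$ to $C^{2,\alpha'}$. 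Choosing the representative of $\Omega_\e$ with $\partial\Omega_\e=\overline{\partial^*\Omega_\e}$ gives the normalization ``up to a negligible set''.

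\textbf{Step 3 (density estimates, part (2)).} Fix $y\in A\cap\partial^*\Omega_\e$ with $B(y,2r_0)\Subset A$, and take $r_0$ below the injectivity radius of $\overline A$ and small enough that on $\overline A$ one has Euclidean-type isoperimetric and relative isoperimetric inequalities for sets of small volume inside geodesic balls, and $|B(y,\rho)|\le C\rho^n$, $\mathcal H^{n-1}(\partial B(y,\rho))\le C\rho^{n-1}$ with $C=C(g|_{\overline A})$. Comparing $\Omega_\e$ with $\Omega_\e\setminus B(y,\rho)$ and with $\Omega_\e\cup B(y,\rho)$ (the $\chi_{\Omega_0}$ term cancels) gives, for every such $\rho$,
\[
\int_{\partial^*\Omega_\e\cap B(y,\rho)}\psi\ \le\ \int_{\Omega_\e\cap\partial B(y,\rho)}\psi\ +\ \Lambda\,|\Omega_\e\cap B(y,\rho)|
\]
together with the analogous inequality with $\Omega_\e$ replaced by $M\setminus\Omega_\e$. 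Dividing by $\inf_A\psi$ and bounding the right-hand side by $C\rho^{n-1}+\Lambda|B(y,\rho)|\le C\rho^{n-1}$ (using $\rho<r_0<1$) yields the upper bound $|\partial^*\Omega_\e\cap B(y,\rho)|\le C_d^{-1}\rho^{n-1}$. For the lower bound put $V(\rho):=|\Omega_\e\cap B(y,\rho)|$; combining the displayed inequality, the estimate $P\big(\Omega_\e\cap B(y,\rho);M\big)\le P\big(\Omega_\e;B(y,\rho)\big)+\mathcal H^{n-1}(\Omega_\e\cap\partial B(y,\rho))$, the coarea identity $V'(\rho)=\mathcal H^{n-1}(\Omega_\e\cap\partial B(y,\rho))$ (a.e.\ $\rho$), and the isoperimetric inequality, one gets $V(\rho)^{\frac{n-1}{n}}\le C\big(V'(\rho)+V(\rho)\big)$; since $V(\rho)\le C\rho^n$ the term $CV(\rho)$ is absorbed into $\tfrac12 V(\rho)^{\frac{n-1}{n}}$ once $r_0$ is small, and integrating $V'(\rho)\ge cV(\rho)^{\frac{n-1}{n}}$ from $0$ gives $V(\rho)\ge c\rho^n$ (here $y\in\partial^*\Omega_\e$ is used to exclude $V\equiv0$ near $0$, since the density of $\Omega_\e$ at $y$ equals $\tfrac12$). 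Symmetrically $|B(y,\rho)\setminus\Omega_\e|\ge c\rho^n$, and the relative isoperimetric inequality in $B(y,\rho)$ then gives $|\partial^*\Omega_\e\cap B(y,\rho)|=P(\Omega_\e;B(y,\rho))\ge C_d\rho^{n-1}$. Taking $C_d$ to be the smaller of the two constants (intersected with $(0,1)$) finishes the proof; all constants depend only on $n,\gamma,\|\log u\|_{L^\infty(A)},g|_{\overline A},\e_0,c,\|f\|_0,q$.

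\textbf{Main obstacle.} None of the individual steps is deep: the regularity in (1) is essentially a citation, and (2) is the textbook almost-minimizer argument. The point requiring genuine care is uniformity---checking that the weight ratio $\sup_A\psi/\inf_A\psi$ and the forcing bound $\Lambda$ stay bounded \emph{independently of $\e<\e_0$}, which is exactly why the conclusion is $\e$-free---together with the bookkeeping that every competitor used is admissible for $\mathcal P_\e$ and that the chosen representative makes $\partial^*\Omega_\e$ an honest hypersurface. I also note that the exponent $\frac{n+3}{n+1}$ in the bulk term plays no role here: any bounded measurable forcing term would do, so this lemma is merely the standard $\mu$-bubble regularity-and-density package transplanted to the weighted manifold $(M,g,e^{-f}\dvolg)$.
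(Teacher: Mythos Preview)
Your proposal is correct and follows the same overall strategy as the paper: reduce to almost-minimality of the weighted perimeter with a uniformly bounded forcing term, then read off regularity and density estimates. For part (1) the paper simply cites \cite[Theorems~27.5 and~28.1]{MaggiBook}, matching your Step~2.

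The only genuine methodological difference is in part (2). You derive the density bounds by hand via the classical $\Lambda$-minimizer route: compare with $\Omega_\e\setminus B(y,\rho)$ and $\Omega_\e\cup B(y,\rho)$, get the upper bound directly, and for the lower bound run the ODE $V'(\rho)\ge cV(\rho)^{(n-1)/n}$ followed by the relative isoperimetric inequality. The paper instead shows that $\Omega_\e$ is $C$-\emph{quasiminimal} in the sense of Kinnunen--Shanmugalingam (comparing with an arbitrary competitor $F$ with $F\Delta\Omega_\e\subset B(y,\rho)$, then absorbing $|F\Delta\Omega_\e|$ via the local isoperimetric inequality and Ahlfors regularity of balls) and cites \cite[Lemma~5.1]{KinnunenShanmugalingamQuasiminimalSets} for the density estimates as a black box. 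Your argument is more self-contained; the paper's is shorter but relies on an external reference. Both identify the same key uniformity point---that $\|h_\e\|_{L^\infty(A)}$ and the weight ratio $\sup_A\psi/\inf_A\psi$ are bounded independently of $\e<\e_0$---which is exactly what you flagged as the main thing to check.
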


\begin{proof}
The first item just follow \cite[Theorem 27.5]{MaggiBook} and \cite[Theorem 28.1]{MaggiBook}. For the second item, denote $C>0$ is a constant depending on $n,N,\gamma,||f||_0,q,||\log u||_{L^\infty(A)},g|_{\overline{A}},\e_0,c$, that change from line to line. Ley $y\in A$ and $\rho>0$ such that $B(y,\rho)\subset A$. Since $h_\e\to 0$ smoothly on $\overline{A}$ as $\e\to 0$, we have $||h_\e||_{L^\infty(\overline{A})}\le C$ for any $\e<\e_0$. For any arbitrary set $F$ of locally finite perimeter such that $F\Delta \O_\e\subset B(y,\rho)$, by the minimizer, we have \begin{equation}
    \begin{split}
        \int_{\p^*\O}u^\gamma e^{-f}&\le \int_{\p^* F}u^\gamma+\int h_\e u^{\gamma}(\chi_{\O_\e}-\chi_F) e^{-\left(\frac{n+3}{n+1}\right)f}\\
        &\le \int_{\p^* F}u^\gamma+||h_\e||_{L^\infty(A)}||u||^\gamma_{L^\infty(A)} e^{100n||f||_0}|F\Delta \O_\e|\\
        &\le\int_{\p^* F}u^\gamma+C|F\Delta \O_\e|
    \end{split}
\end{equation}By the upper and lower bound of $u$ in $B(y,\rho)$, we have \begin{equation}
    \begin{split}
        \int_{\p^*\O_\e\setminus \p^*F}u^\gamma\ge \int_{(\p^*\O_\e\setminus \p^*F)\cap B(y,\rho)}u^\gamma\ge \int_{\p^*\O_\e\cap B(y,\rho)}u^\gamma-\int_{\p^*F\cap B(y,\rho)}u^\gamma
    \end{split}
\end{equation}Therefore, we have \begin{equation}
   C|\p^*\O_\e\cap B(y,\rho)| \le\int_{\p^*\O_\e\cap B(y,\rho)}u^\gamma\le \int_{\p^*F\cap B(y,\rho)}u^\gamma+C|F\Delta \O_\e|\le C(|\p^*F\cap B(y,\rho)|+|F\Delta \O_\e|)
\end{equation}Since we are working in a pre-compact set $A$, $|\n f|,f$ are bounded in $A$. Therefore, the local isoperimetric inequality with Euclidean exponents for sets contained in $A$ and the Ahlofs-type bound contained in $A$ are still hold in weighted measure. Therefore, we have\begin{equation}
    \begin{split}
    \big|\partial^*\Omega_\e \cap B(y,\rho)\big|
    &\leqslant C \left(\big|\partial^* F \cap B(y,\rho)\big| + \big|F\Delta \Omega_\e\big|^{\frac1n} \big|F\Delta \Omega_\e\big|^{\frac{n-1}{n}} \right) \\
    &\leqslant C \Big(\big|\partial^* F\cap  B(y,\rho)\big| + \big|B(y,\rho)\big|^{\frac1n}\cdot C_{\rm iso} \big|\partial^* ( F\Delta \Omega_\e)\big| \Big)
    \\
    &\leqslant C \Big( \big|\partial^* F\cap B(y,\rho)\big| + C \rho \Big[ \big|\partial^* \Omega_\e\cap B(y,\rho)\big| + \big|\partial^* F \cap B(y,\rho)\big| \Big] \Big),
\end{split}
\end{equation} 
Then, $\Omega_\e$ is a $C$-quasiminimal set in balls $B(y,2r_0) \Subset A$ in the sense of \cite[Definition 3.1]{KinnunenShanmugalingamQuasiminimalSets}. The claimed density estimates then follow from
\cite[Lemma 5.1]{KinnunenShanmugalingamQuasiminimalSets} and Item (1).
\end{proof}
\subsection{Quantitative estimate for quadratic polynomial}\label{subsec: Quantitative Estimate for Quad}
In this section, we consider the following quadratic polynomial such that $a,c>0$ and $b\in \R$ with negative discriminant.\begin{equation}\label{eq:quadratic polynomial}
    -ax^2+bx-c\tab \Delta=b^2-4ac<0
\end{equation}We are looking for the range and the relationship between $d_0,d_1>0$ such that the following inequality hold \begin{equation}\label{eq: main inequality in appendix}
    -ax^2+bx-c\le -d_0 x^2-d_1
\end{equation}
\begin{theorem}\label{thm:polynomial estimate}
    Let $p(x)$ be a quadratic polynomial with above condition in (\ref{eq:quadratic polynomial}). Let $d_0,d_1>0$ such that (\ref{eq: main inequality in appendix}) hold. Then we have the following range \begin{itemize}
        \item[(i)] $0<d_0<a$ and $0<d_1<c$
        \item[(ii)]Denote $\Delta=-4\Psi$ such that $\Psi>0$. Assuming item(i) and $cd_0<\Psi/2$, such that the following inequality hold for any $d_1$ \begin{equation}
            0<d_1\le \dfrac{cd_0-\Psi}{d_0-a}
        \end{equation}then $(\ref{eq: main inequality in appendix})$ holds.
        \item[(iii)]Consider $0<\bar{d}<\min\left\{\frac{a}{2},\frac{\Psi}{2c},\frac{a\Psi}{b^2}\right\}$, such that for any $d_0<\bar{d}$,if item(i) and the following inequality hold \begin{equation}
            0<d_1\le \dfrac{\Psi}{a}-\dfrac{b^2}{a^2}d_0
        \end{equation}then (\ref{eq: main inequality in appendix}) holds.
    \end{itemize}
\end{theorem}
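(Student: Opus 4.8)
The plan is to treat the inequality $-ax^2 + bx - c \le -d_0 x^2 - d_1$ by moving everything to one side and analyzing the resulting quadratic $Q(x) := -(a-d_0)x^2 + bx - (c - d_1)$, which we need to be $\le 0$ for all $x \in \R$. First I would prove item (i): necessity of $0 < d_0 < a$ comes from the behavior as $|x| \to \infty$ — if $d_0 \ge a$ the leading coefficient of $Q$ is $\ge 0$ and $Q$ cannot stay nonpositive (when $d_0 > a$ it tends to $+\infty$; when $d_0 = a$ it is linear or constant and, since $b \neq 0$ or $c - d_1$ would have to be handled, fails unless degenerate — here I'd use $b^2 < 4ac$ so in particular $c>0$, and rule out the edge case). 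Necessity of $d_1 < c$ follows by evaluating at $x = 0$: the inequality at $x=0$ reads $-c \le -d_1$, i.e. $d_1 \le c$; strictness $d_1 < c$ should come from the strict discriminant hypothesis (the original polynomial is strictly negative everywhere, so at its maximum it is strictly below $0$, forcing a strict gap). So item (i) is essentially the two "boundary" tests (at infinity and at the origin) combined with the open/strict discriminant condition.

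For items (ii) and (iii), once $0 < d_0 < a$ is known, $Q$ is a genuine downward-opening parabola, so $Q \le 0$ everywhere is equivalent to its discriminant being $\le 0$:
\begin{equation}
    b^2 - 4(a-d_0)(c-d_1) \le 0.
\end{equation}
Writing $b^2 = 4ac - 4\Psi$ (using $\Delta = b^2 - 4ac = -4\Psi$, $\Psi > 0$), the condition becomes $4ac - 4\Psi - 4(a-d_0)(c - d_1) \le 0$, i.e. after expanding $(a-d_0)(c-d_1) = ac - a d_1 - c d_0 + d_0 d_1$,
\begin{equation}
    -\Psi + a d_1 + c d_0 - d_0 d_1 \le 0,
    \qquad\text{equivalently}\qquad
    d_1 (a - d_0) \le \Psi - c d_0.
\end{equation}
Since $a - d_0 > 0$, this is exactly $d_1 \le \dfrac{\Psi - c d_0}{a - d_0} = \dfrac{c d_0 - \Psi}{d_0 - a}$, which is item (ii); the side hypothesis $c d_0 < \Psi/2$ (hence $< \Psi$) just guarantees the right-hand side is positive so that a positive $d_1$ exists in the stated range. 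For item (iii) I would start from the same sharp inequality $d_1(a - d_0) \le \Psi - c d_0$ and simply weaken it: bound the left side using $a - d_0 > 0$ isn't enough, so instead I would show the stated bound $d_1 \le \frac{\Psi}{a} - \frac{b^2}{a^2} d_0$ implies $d_1(a-d_0) \le \Psi - c d_0$ under the constraint $d_0 < \bar d$. Substituting $b^2 = 4ac - 4\Psi$ into $\frac{\Psi}{a} - \frac{b^2}{a^2}d_0 = \frac{\Psi}{a} - \frac{(4ac - 4\Psi)d_0}{a^2}$ and multiplying through by $(a - d_0)$, one checks termwise that the smallness conditions $\bar d < a/2$, $\bar d < \Psi/(2c)$, $\bar d < a\Psi/b^2$ are precisely what is needed to absorb the error terms (the factor $a - d_0 \le a$ on one side, and $d_0^2$ and $d_0 \Psi$ cross terms controlled by the three thresholds). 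The three constraints on $\bar d$ look exactly like the three places where a term of the form $d_0 \cdot(\text{something})$ must be dominated by a term without $d_0$.

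The main obstacle I expect is item (iii): it is not a clean equivalence but a deliberately loosened sufficient condition designed so that $d_1$ depends \emph{linearly} on $d_0$ with an explicit slope $-b^2/a^2$ (this linearity is what makes it usable downstream in the splitting argument, where $d_0, d_1$ will be small parameters tied to $\gamma$). Getting the three thresholds $a/2$, $\Psi/(2c)$, $a\Psi/b^2$ to come out exactly as stated requires choosing the right way to split the inequality $d_1(a-d_0) \le \Psi - cd_0$ into pieces — presumably bounding $d_1 \cdot d_0 \le \bar d\, d_1$ and then $\bar d\, d_1 \le \frac{a}{2} d_1$ and so on — and I would need to be careful that each weakening step is valid for \emph{all} $d_0 < \bar d$, not just in the limit. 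The rest (items (i) and (ii)) is routine quadratic analysis: sign of the leading coefficient, value at $x=0$, and the discriminant criterion for a downward parabola to be nonpositive.
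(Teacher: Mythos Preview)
Your proposal is correct. Items (i) and (ii) match the paper's proof essentially line for line: rewrite the inequality as $p(x):=(d_0-a)x^2+bx+(d_1-c)\le 0$, read off $d_0<a$ from concavity and $d_1<c$ from $p(0)\le 0$, and for (ii) impose the discriminant condition $b^2-4(d_0-a)(d_1-c)\le 0$, which after substituting $b^2=4ac-4\Psi$ rearranges to $d_1\le \dfrac{cd_0-\Psi}{d_0-a}$.

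For item (iii) your route differs from the paper's. The paper does not multiply out and absorb error terms; instead it treats the sharp bound from (ii) as a function $f(t)=\dfrac{ct-\Psi}{t-a}$ on $[0,\bar d]$, computes $f'(t)=-\dfrac{b^2}{4(t-a)^2}$, uses $\bar d<a/2$ to bound $|f'|\le b^2/a^2$ uniformly, and then applies the first-order Taylor estimate $f(d_0)\ge f(0)-d_0\cdot\max|f'|=\dfrac{\Psi}{a}-\dfrac{b^2}{a^2}d_0$. This makes the linear lower bound and the constant $b^2/a^2$ appear transparently as ``value at $0$ minus slope bound times $d_0$''. Your direct algebraic route also works: multiplying $\dfrac{\Psi}{a}-\dfrac{b^2}{a^2}d_0\le \dfrac{\Psi-cd_0}{a-d_0}$ through by $a-d_0>0$ and simplifying (using $b^2=4ac-4\Psi$) reduces to $d_0\le \tfrac{3a}{4}$, which is implied by $\bar d<a/2$. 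So in fact you will find that only the threshold $a/2$ is used for the implication itself; the thresholds $\Psi/(2c)$ and $a\Psi/b^2$ are there to guarantee positivity of the bounds (so that the stated range for $d_1$ is nonempty), not to ``absorb error terms'' as you anticipated. Your worry about matching all three thresholds termwise is therefore unnecessary.
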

\begin{proof}
    Consider the following quadratic polynomial $p(x)$, \begin{equation}
        p(x):=(d_0-a)x^2+bx+(d_1-c)
    \end{equation}If we want $p(x)\le 0$, we need it is concave downward with negative $y$-intercept because $b$ might be non-zero. This condition gives item $(i)$

    To guarantee the non-negativity, we need its discrimiant is non-positive so we have\begin{equation}
        \begin{split}
            &\quad b^2-4(d_0-a)(d_1-c)\\
            &=b^2-4ac-4(d_0d_1-cd_0-ad_1)\\
            &=-4\Psi-4(d_0d_1-cd_0-ad_1)\le 0
        \end{split}
    \end{equation}By grouping the term and note that $d_0-a<0$, we have \begin{equation}
        d_1\le \dfrac{cd_0-\Psi}{d_0-a}
    \end{equation}The existence of a positive constant $d_1$ is guaranteed by the assumption $c d_0<\frac{\Psi}{2}$. This gives item$(ii)$

    For item$(iii)$, we consider the function $f:[0,\bar{d}]\to \R$ such that \begin{equation}
        f(x)=\dfrac{cx-\Psi}{x-a}
    \end{equation}The first derivative is \begin{equation}
        f'(x)=\dfrac{c(x-a)-(cx-\Psi)}{(x-a)^2}=\dfrac{\Psi-ac}{(x-a)^2}=-\dfrac{b^2}{4(x-a)^2}\le 0
    \end{equation}Therefore, $f$ is monotone decreasing for $x\in[0,\bar{d}]$ and we have \begin{equation}
        \max_{x\in [0,\bar{d}]}|f'(x)|= \max_{x\in [0,\bar{d}]}\dfrac{b^2}{4(x-a)^2}\le \dfrac{b^2}{a^2}
    \end{equation} The last inequality take $x=\frac{a}{2}$. By Taylor's theorem, we have \begin{equation}\label{eq: App Lem (iii)}
        f(d_0)\ge f(0)-d_0 \max_{x\in [0,\bar{d}]}|f'(x)|=\dfrac{\Psi}{a}-\dfrac{b^2}{a^2}d_0
    \end{equation}By item $(ii)$ and \ref{eq: App Lem (iii)}, we have \begin{equation}
        0<d_1\le \dfrac{\Psi}{a}-\dfrac{b^2}{a^2}d_0
    \end{equation}The positivity is guranteed by $d_0<\frac{a\Psi}{b^2}$. This gives item$(iii)$

\end{proof}

\section{Spectral Splitting theorem in BE condition}\label{sec:main thm}
In this section, we are going to prove the Theorem \ref{thm:main_thm}.  If $\gamma\le 0$ the statement reduces to the classical $N$-Bakry \'Emery splitting theorem. Hence, we assume $\gamma>0$.
\subsection{\texorpdfstring{$\mu$}{} bubble construction}
    By Theorem \ref{thm:equiv+positive}, and since $\Ric^N_f\in Lip_{loc}(M)$, the main condition $\ref{eq:main condition}$, implies the existence of a function $u_0\in C^{2,\a}(M)$ with \begin{equation}
    u_0>0,\tab -\gamma\Delta_f u_0+\Ric^N_f\cdot u_0=0
\end{equation}Fix an arbirary $x\in M$. We claim that $|\n u_0|(x)=0$. Once this is proved, since $x$ is arbitrary, it follows that $u_0$ is constant on the entire $M$, and thus $\Ric^N_f\ge 0$, and then the theorem is reduced to the splitting theorem in $\Ric^N_f\ge 0$ by Theorem \ref{thm:splitting theorem BE}. 

 Since $M$ has more than one end, we can find a smooth surjective proper function $\phi:M\to\R$ such that
    \begin{equation}\label{eq:phi}
        |\n\phi|\leq1,\qquad |\phi(x)|<1/2,\qquad \Omega_0:=\phi^{-1}\big((-\infty,0)\big)\text{ has smooth boundary.}
    \end{equation}
    The construction is as follows. First, we find a smooth surjective proper $1$-Lipschitz function $\tilde\phi:M\to\R$. Indeed, given a separating compact hypersurface bounding a distinguished end, $\tilde\phi$ can be obtained by smoothing the signed distance function from such end as in \cite[Lemma 2.1]{Zhu23JDG}. Let $L$ be a regular value of $\tilde\phi$ and $q:=e^{\frac{2}{n-1}||f||_{0}}$ such that $|\tilde\phi(x) - L| < q/2$. Hence, $\phi:= \tilde\phi - L$ has the desired properties.

    Let $0<\delta,r\ll1$ be such that
    \begin{equation}\label{eq:radii_constraints}
        B(x,1/\delta)\Supset\phi^{-1}\big([-2q,2q]\big),\qquad
        B(x,2r)\Subset\phi^{-1}\big([-q,q]\big).
    \end{equation}
    Let us apply Lemma \ref{lemma:perturb_new} to the ball $B(x,r)$, with data $u_0$ and $f=\Ric^N_f$. Denote by $w_r$ the resulting function.
    For a parameter $a>0$, set $u_{r,a}:=u_0+aw_r$. There exists a small $a_0=a_0(\delta,w_r,u_0,N,||f||_{0})$ so that for all $a<a_0$, we have
    \begin{equation}\label{eq:choice_a_1}
        u_0/2\leq u_{r,a}\leq u_0,\qquad ||u_{r,a}-u_0||_{C^2(B(x,1/\delta))}<\delta,
    \end{equation}
    and
    \begin{equation}\label{eq:choice_a_2}
        \left\{\begin{aligned}
            & -\gamma\Delta_f u_{r,a}+\Ric^N_f\cdot u_{r,a}>0\qquad\text{in $M\setminus B(x,r)$,} \\
            & -\gamma\Delta_f u_{r,a}+\Ric^N_f\cdot u_{r,a}\geq-\delta \cdot u_{r,a}
            \qquad\text{in $B(x,r)$.} 
        \end{aligned}\right.
    \end{equation}
    In particular, in light of \eqref{eq:radii_constraints} and \eqref{eq:choice_a_2}, we have
    \begin{equation}\label{eq:choice_a_3}
        -\gamma\Delta_f u_{r,a}+\Ric^N_f\cdot u_{r,a}\geq\mu\cdot u_{r,a}\qquad\text{in $\phi^{-1}\big([-2q,2q]\big)\setminus B(x,2r)$,}
    \end{equation}for some constant $\mu=\mu(a,w_r,u_0,N,||f||_{0})>0$.

    For a constant $c_0=c_0(n,\gamma,N)>0$, that we defined in $(\ref{eq:estimate lemma})$, and for any $\e<1/10$, let $h_\e$ be defined as     \begin{equation}\label{eq:mu_e function}
        h^q_\e(x):=c_0^{-1}\bar h^q_\e(\phi(x)),\qquad\forall x\in\phi^{-1}\big((-q/\e,q/\e)\big),
    \end{equation}
 where $\bar h^q_\e$ is as in Lemma \ref{lemma:mu_bubble_function}. We will simply denote $h_\e:=h^{q}_\e$ In particular, on $\phi^{-1}\big((-q/\e,-q]\cup[q,q/\e)\big)$ we have 
    \begin{equation}\label{eq:dh+h2}
        \begin{split}
                c_0h_\e^2 - q|\n h_\e| &= 
                c_0^{-1}\Big[\bar h_\e(\phi)^2+q(\bar h^q_\e)'(\phi)||\nabla\phi|\Big]
                \geq c_0^{-1}\e^2,
        \end{split}
    \end{equation}
    where we used that $\phi$ is $1$-Lipschitz and Lemma \ref{lemma:mu_bubble_function}(1)(3). Then by Lemma \ref{lemma:mu_bubble_function}(2)(3), inside $\phi^{-1}\big([-q,q]\big)$ we have
    \begin{equation}\label{eq:dh+h2_2}
        c_0h_\e^2-|\n h_\e|\geq-Cc_0^{-1}\e,
    \end{equation}
    where $C$ is the constant in \ref{lemma:mu_bubble_function}(2). For $\delta,r,a$ small enough as indicated above, we perform the following $\mu$-bubble argument. For notational simplicity, we denote $u=u_{r,a}$. For any set of finite perimeter $\Omega$ with $\Omega\Delta \Omega_0 \subset \phi^{-1}\big((-q/\e,q/\e)\big)$, define the energy
\begin{equation}
    \mathcal{P}_\e(\Omega)=\int_{\p^*\Omega}u^{\gamma}e^{-f}-\int(\chi_{\Omega}-\chi_{\Omega_0})hu^\gamma e^{-(k+1)f}
\end{equation}

As argued in \cite[Proposition 12]{ChodoshliSoapBubbles24}, recalling that $h_\varepsilon$ diverges as $\phi\to \pm q/\e$ by Lemma \ref{lemma:mu_bubble_function}(3), there exists a minimizer $\Omega_\e$ for $\mathcal{P}_\e$ with $\Omega_\e\Delta \Omega_0 \Subset \phi^{-1}\big((-q/\e,q/\e)\big)$. 

 Since $u\in C^{2,\alpha}$, the regularity result in Lemma \ref{lem:DensityEstimatesConvergence}(1) applies to $\Omega_\e$.
    Let $\nu$ denote the outer unit normal at $\partial^*\Omega_\e$, and let $\varphi\in C^\infty(M)$. For an arbitrary smooth variation $\{F_t\}_{t\in(-t_0,t_0)}$, with $F_0=\Omega_\e$, and with variation field equal to $\varphi\nu$ at $t=0$.\footnote{When $n\ge 8$, the boundary $\p^*\Omega_\e$ may contain a nonempty codimension 8 singular set $\p^*\Omega_\e \setminus \p^*\Omega_\e$, see lemma \ref{lem:DensityEstimatesConvergence}(1). However, arguing as in \cite[Appendix A]{AX24}, multiplying $\varphi$ by a cut off function vanishing on $\p^*\Omega_\e \setminus \p^*\Omega_\e$, one can calculate the first and second variation of $\mathcal P_\e$, and carry out computations leading to (\ref{eq: 2nd variation final form}). For the sake of completeness, We present}
\subsection{Geometric Estimate for \texorpdfstring{$\mu$}{}-bubbles}
\subsubsection{First variation}
 We compute the first variation\begin{equation}
    0 = \frac{d}{dt^2}\mathcal{P}_\e(F_t)\Big|_{t=0}
= \int_{\p^*\Omega}(H+\gamma u^{-1}u_\nu-f_\nu-he^{-kf})u^\gamma e^{-f}\varphi.
\end{equation}
Since $\vp$ is arbitrary, then we obtain
\begin{equation}\label{H}
H = f_\nu+he^{-kf}-\gamma u^{-1}u_\nu.
\end{equation}
\subsubsection{Second variation}We next compute the second variation:
\begin{equation}
    \begin{split}
0 \leq {} & \frac{d^2}{dt^2}\mathcal{P}_\e(F_t)\Big|_{t=0} \\
= {} & \int_{\p^*\Omega}\Big(-\Delta_{\p^*\Omega}\varphi-|\mathrm{II}|^2\varphi-\Ric(\nu,\nu)\varphi-\gamma u^{-2}u_\nu^2\varphi \\
& \quad \quad \ \, +\gamma u^{-1}\varphi(\Delta u-\Delta_{\p^*\Omega}u-Hu_\nu)-\gamma u^{-1}\langle\nabla_{\p^*\Omega}u,\nabla_{\p^*\Omega}\varphi\rangle \\[1mm]
& \quad \quad \ \, -\mathrm{Hess} f(\nu,\nu)\vp + \langle\nabla_{\p^*\Omega} f,\nabla_{\p^*\Omega}\varphi\rangle-h_{\nu}e^{-kf}\varphi\\
& \quad \quad \ \,+k he^{-kf}f_{\nu}\varphi\Big) u^\gamma e^{-f}\varphi.
\end{split}
\end{equation}
By the definition of $\Delta_{f}$ and $\Ric_{f}^{N}$,
\begin{equation}\label{Ric Delta u Hess f}
\begin{split}
& -\Ric(\nu,\nu)\varphi+\gamma u^{-1}\varphi\Delta u-\mathrm{Hess} f(\nu,\nu)\vp \\[1mm]
= {} & -\big(\Ric_{f}^{N}(\nu,\nu)+\frac{1}{N}f_{\nu}^{2}\big)\varphi+\gamma u^{-1}\varphi\big(\Delta_{f}u+\langle\nabla u,\nabla f\rangle\big) \\
= {} & \big(\gamma\Delta_{f}u-u\Ric_{f}^{N}(\nu,\nu)\big)u^{-1}\varphi-\frac{1}{N}f_{\nu}^{2}\vp+\gamma u^{-1}\varphi\langle\nabla u,\nabla f\rangle.
\end{split}
\end{equation}
It then follows that
\begin{equation}
    \begin{split}
0 \leq \int_{\p^*\Omega}
\Big(& -\Delta_{\p^*\Omega}\varphi-|\mathrm{II}|^2\varphi+\big(\gamma\Delta_{f}u-u\Ric_{f}^{N}(\nu,\nu)\big)u^{-1}\varphi-\frac{1}{N}f_{\nu}^{2}\vp \\
& +\gamma u^{-1}\varphi\langle\nabla u,\nabla f\rangle-\gamma u^{-2}u_\nu^2\varphi+\gamma u^{-1}\varphi(-\Delta_{\p^*\Omega}u-Hu_\nu) \\[1mm]
& -\gamma u^{-1}\langle\nabla_{\p^*\Omega}u,\nabla_{\p^*\Omega}\varphi\rangle
+\langle\nabla_{\p^*\Omega} f,\nabla_{\p^*\Omega}\varphi\rangle-h_{\nu}e^{-kf}\varphi\\
& +khe^{-kf}f_{\nu}\varphi\Big) u^\gamma e^{-f}\varphi.
\end{split}
\end{equation}Since $\p^*\Omega_\e$ is compact, by taking $\varphi=u^{-\frac{\gamma}{2}}$. We consider the following lemma, which is inspired by \cite{Chu_BE}
\begin{lemma}
    When $\varphi=u^{-\frac{\gamma}{2}}$, we have \begin{equation}
(\gamma u^{-1}\varphi\<\n u,\n f\>+\<\n_{\p^*\Omega}f,\n_{\p^*\Omega}\varphi\>)u^{\gamma}\varphi=\gamma u^{-1}u_\nu f_\nu+\dfrac{\gamma}{2}u^{-1}\<\n_{\p^*\Omega}f,\n_{\p^*\Omega} u\>
    \end{equation}
\end{lemma}
\begin{proof}
    The result is following by computation. It is not hard to notices that $$u^\gamma\cdot u^{-\frac{\gamma}{2}}=u^{\frac{\gamma}{2}}=\varphi^{-1}$$Therefore, we have \begin{equation}
        \begin{split}
            &(\gamma u^{-1}\varphi\<\n u,\n f\>+\<\n_{\p^*\Omega}f,\n_{\p^*\Omega}\varphi\>)u^{\gamma}\varphi\\
            ={}&(\gamma u^{-1}\varphi\<\n u,\n f\>+\<\n_{\p^*\Omega}f,\n_{\p^*\Omega}\varphi\>)\varphi^{-1}\\
            ={}&\gamma u^{-1}\<\n u,\n f\>+u^{\frac{\gamma}{2}}\<\np f,\np u^{-\frac{\gamma}{2}}\>\\
            ={}&\gamma u^{-1}\<\n u,\n f\>-\frac{\gamma}{2}u^{-1}\<\n_{\p^*\Omega}f,\np u\>\\
            = {}&\gamma u^{-1}u_\nu f_\nu+\dfrac{\gamma}{2}u^{-1}\<\np f,\np u\>
        \end{split}
    \end{equation}
\end{proof}
As a result, we obtain,\begin{equation}\label{eq: 2nd variation final form}
  \begin{split}
        0\le& \dfrac{d^2}{dt^2}\mathcal{P}_\e(F_t)\Big|_{t=0}\\
        \le&\int_{\p^*\Omega}\Big[-u^{\frac{\gamma}{2}}\Delta_{\p^*\Omega}u^{-\frac{\gamma}{2}}-\gamma u^{-1}\Delta_{\p \O}u-\gamma u^{\frac{\gamma}{2}-1}\<\n_{\p^*\Omega }u,\n_{\p \O}u^{-\frac{\gamma}{2}}\>+\frac{\gamma}{2}\<\n_{\p^*\Omega}f,\n_{\p^*\Omega}u\>u^{-1}\Big]e^{-f}\\
        &\quad+\Big[-|\mathrm{II}|^2-\dfrac{1}{N}f^2_\nu-\gamma u^{-2}u^2_\nu-\gamma u^{-1}Hu_\nu-h_\nu e^{-kf}+\gamma u^{-1}u_\nu f_\nu+kh e^{-k f} f_\nu\Big]e^{-f}\\
        &\quad +\Big[\gamma u^{-1} \Delta_f u-\Ric^N_f(\nu,\nu)\Big]e^{-f}\\
        :=&\int (P+Q+R) e^{-f}\dvolg
  \end{split}
\end{equation}Denote $P$ be the tangential part, $Q$ be the normal part and $R$ be the spectral part.\\
\subsubsection{Tangential Component Estimate}
We consider the tangential part, by straight forward computation to each term.
\begin{equation}\label{eq: P1}
\begin{split}
        \int_{\p^*\Omega}(-u^{\frac{\gamma}{2}}\Delta_{\p^*\Omega}u^{-\frac{\gamma}{2}})e^{-f}&= \int_{\p^*\Omega}\<\n_{\p^*\Omega}u^{-\frac{\gamma}{2}},\n_{\p^*\Omega}(e^{-f}u^\frac{\gamma}{2})\>\\
        &=\int_{\p^*\O}\<-\frac{\gamma}{2}u^{-\frac{\gamma}{2}-1}\n_{\p^*\Omega}u,-e^{-f}u^\frac{\gamma}{2}\n_{\p^*\Omega} f+e^{-f}u^{\frac{\gamma}{2}-1}\frac{\gamma}{2}\n_{\p^*\Omega} u\>\\
        &=\int_{\p^*\O}\Big[\dfrac{\gamma}{2}u^{-1}\<\n_{\p^*\Omega}u,\n_{\p^*\Omega} f\>-\left(\dfrac{\gamma}{2}\right)^2u^{-2}|\n_{\p^*\Omega} u|^2\Big] e^{-f}
\end{split}
\end{equation}
By similar computation, we have
\begin{equation}\label{eq: P2}
\int_{\p^*\O}(-\gamma u^{-1}e^{-f})\Delta_{\p^*\Omega}u=\int_{\p^*\O}\Big[-\gamma u^{-2}|\np u|^2-\gamma u^{-1}\<\np f,\np u\>\Big] e^{-f}
\end{equation}
By Chain rule, \begin{equation}\label{eq:P3}
    \int_{\p^*\O}-\gamma u^{\frac{\gamma}{2}-1}\<\n_{\p^*\Omega }u,\np u^{-\frac{\gamma}{2}}\>e^{-f}=\int_{\p^*\O}\left(\dfrac{\gamma^2}{2}\right)u^{-2}|\np u|^2 e^{-f}
\end{equation}Combining the equation $(\ref{eq: P1}),(\ref{eq: P2}),(\ref{eq:P3})$, we obtain for $0< \gamma\le 4$ \begin{equation}\label{eq: tangential estimate}
    \begin{split}
        &\int P\, e^{-f}\dvolg\\
        = {} &\int_{\p^*\Omega}\Big[\dfrac{\gamma}{2}u^{-1}\<\n_{\p^*\Omega}u,\n_{\p^*\Omega} f\>-\left(\dfrac{\gamma}{2}\right)^2u^{-2}|\n_{\p^*\Omega} u|^2-\gamma u^{-2}|\np u|^2-\gamma u^{-1}\<\np f,\np u\>\\
         {}  & \quad\quad\quad\hspace{-1mm} +\left(\dfrac{\gamma^2}{2}\right)u^{-2}|\np u|^2+\dfrac{\gamma}{2}u^{-1}\<\np f,\np u\>\Big]e^{-f}\dvolg\\
         = {} &\int_{\p^*\O}\gamma\left(\dfrac{\gamma}{4}-1\right)u^{-2}|\np u|^2 e^{-f}\dvolg\\
         \le {}& 0
    \end{split}
\end{equation}
\subsubsection{Normal Component Estimate}

For the normal part, by $|\mathrm{II}|^2\geq \frac{H^2}{n-1}$, we have \begin{equation}
    \begin{split}
\int_{\p^*\Omega} Q\, e^{-f}\dvolg \leq \int_{\p^*\Omega}
\Big(& -\frac{H^2}{n-1}-\frac{1}{N}f_{\nu}^{2}+\gamma u^{-1}u_{\nu}f_{\nu}\\
& -\gamma u^{-2}u_\nu^2-\gamma u^{-1}Hu_\nu -h_{\nu}e^{-kf}\\
&+khe^{-kf}f_{\nu}\Big)e^{-f} \\
\leq \int_{\p^*\Omega}
\Big(& -\frac{H^2}{n-1}-\frac{1}{N}f_{\nu}^{2}+\gamma u^{-1}u_{\nu}f_{\nu}\\
& -\gamma u^{-2}u_\nu^2-\gamma u^{-1}Hu_\nu +|\nabla h|e^{-kf}\\
&+khe^{-kf}f_{\nu}\Big)e^{-f}.
\end{split}
\end{equation}
Set $X=he^{-kf}$ and $Y=u^{-1}u_{\nu}$. By the construction of $\mu$-Bubble, we have $H=f_{\nu}+X-\gamma Y$.
Direct computations show,
\begin{equation}
    \begin{split}
0 \leq \int_{\p^*\Omega}\bigg[& -\frac{(f_{\nu}+X-\gamma Y)^2}{n-1}-\frac{1}{N}f_{\nu}^2 \\
& +\gamma Yf_\nu-\gamma Y^2  -\gamma Y(f_\nu+X-\gamma Y)\\
&+|\nabla h|e^{-kf}+kXf_\nu  \bigg] e^{-f} \\
= \int_{\p^*\Omega}
\bigg[& -\Big(\frac{1}{n-1}+\frac{1}{N}\Big)f_{\nu}^2+\Big(k-\frac{2}{n-1}\Big)Xf_{\nu}+\frac{2\gamma}{n-1}Yf_{\nu} \\
& -\frac{X^{2}}{n-1}+\Big(\frac{2\gamma}{n-1}-\gamma\Big)XY+\Big(\frac{n-2}{n-1}\gamma^2-\gamma\Big)Y^2 \\
& +|\nabla h|e^{-kf}\bigg]e^{-f}.
\end{split}
\end{equation}Consider the following $(X,Y)$-polynomial \begin{equation}
    -\frac{X^{2}}{n-1}+\Big(\frac{2\gamma}{n-1}-\gamma\Big)XY+\Big(\frac{n-2}{n-1}\gamma^2-\gamma\Big)Y^2 
\end{equation}Computing the discriminant, we get \begin{equation}\label{eq:first discriminant}
    \Delta=\Big(\frac{2\gamma}{n-1}-\gamma\Big)^2-4\left(-\frac{X^{2}}{n-1}\right)\Big(\frac{n-2}{n-1}\gamma^2-\gamma\Big)=\gamma\left(\gamma-\dfrac{4}{n-1}\right)<0
\end{equation}when $0<\gamma<\frac{4}{n-1}$.\\Therefore, there exists $c_0,c_1>0$ such that \begin{equation}\label{eq:estimate lemma}
     -\frac{X^{2}}{n-1}+\Big(\frac{2\gamma}{n-1}-\gamma\Big)XY+\Big(\frac{n-2}{n-1}\gamma^2-\gamma\Big)Y^2 \le -c_0  X^2-c_1 Y^2
\end{equation}By the quantitative estimate in theorem \ref{thm:polynomial estimate}, there exists $0<\bar{c}=\bar{c}(n,\gamma)\le \frac{1}{2(n-1)}$ such that for any $c_0<\frac{1}{n-1}$, $c_1<\frac{n-2}{n-1}\gamma^2-\gamma$ and $0<c_0<\bar{c}$, we can choose \begin{equation}\label{eq:apply quan estimate}
    c_1=\Psi(n-1)-(n-1)^2\left(\frac{2\gamma}{n-1}-\gamma\right)^2c_0=\left(\dfrac{n-1}{4}\right)\gamma\left(\frac{4}{n-1}-\gamma\right)-(n-3)^2\gamma^2c_0
\end{equation} such that (\ref{eq:estimate lemma}) holds.\\If we further choose $k=\frac{2}{n-1}$, then the normal part $Q$ will have the following expression. 
\begin{equation}
    \begin{split}
0 \leq \int_{\p^*\Omega}
\bigg[& -\Big(\frac{1}{n-1}+\frac{1}{N}\Big)f_{\nu}^2+\frac{2\gamma}{n-1}Yf_{\nu} \\
& -c_0 X^2-\left[\left(\dfrac{n-1}{4}\right)\gamma\left(\frac{4}{n-1}-\gamma\right)-(n-3)^2\gamma^2c_0\right]Y^2 \\
& +|\nabla h|e^{-kf}\bigg]e^{-f}.
\end{split}
\end{equation}Consider the $(f_\nu,Y)$-polynomial, \begin{equation}
\begin{split}
        p(f_\nu,Y):=&-\Big(\frac{1}{n-1}+\frac{1}{N}\Big)f_{\nu}^2+\frac{2\gamma}{n-1}Yf_{\nu}\\
        &-\left[\left(\dfrac{n-1}{4}\right)\gamma\left(\frac{4}{n-1}-\gamma\right)-(n-3)^2c_0\gamma^2\right]Y^2
\end{split}
\end{equation}By completing the square, we have
\begin{equation}
    \begin{split}
         &\quad-\Big(\frac{1}{n-1}+\frac{1}{N}\Big)f_{\nu}^2+\frac{2\gamma}{n-1}Yf_{\nu}-\left[\left(\dfrac{n-1}{4}\right)\gamma\left(\frac{4}{n-1}-\gamma\right)-(n-1)^2\left(\frac{2\gamma}{n-1}-\gamma\right)c_0\right]Y^2\\
         &=-\left(\dfrac{1}{n-1}+\frac{1}{N}\right)\left(f_\nu-\left(\dfrac{1}{n-1}+\dfrac{1}{N}\right)^{-1}\dfrac{\gamma}{n-1}Y\right)^2\\
         &\,\,\quad+\gamma\left[\left(\dfrac{1}{(n-1)+(n-1)^2/N}+\frac{n-1}{4}\right)\gamma-1+(n-3)^2\gamma c_0\right]Y^2
    \end{split}
\end{equation}Denote $$c_{N,n}=\dfrac{1}{(n-1)+(n-1)^2/N}+\frac{n-1}{4}$$
To make the last term negative, we assume the range for $\gamma$ is \begin{equation}\label{eq:final range of gamma}
    0<\gamma<\frac{1}{c_{N,n}+(n-3)^2c_0}
\end{equation}
When $n=3$, we get the upperbound of $\gamma$ in $(\ref{eq:final range of gamma})$ which we want in Theorem $\ref{thm:main_thm}$.

We consider $n\neq 3$, but the choice of $c_0$ is depends on $\gamma$. To find a range of $c_0$ that is independent of $\gamma$. We first observe that $(n-3)^2c_0\ge 0$, then $\gamma$ has an upper bound $$    0<\gamma<\frac{1}{c_{N,n}+(n-3)^2c_0}<c_{N,n}^{-1}$$
By the quantitative estimate in Theorem $\ref{thm:polynomial estimate}$, we have the explicit expression of $\bar{c}$ in the following by matching the coefficient\begin{equation}\label{eq:coefficient of bar c}
    \begin{split}
        \bar{c}&=\min\left\{\dfrac{a}{2},\dfrac{\Psi}{2c},\dfrac{a\Psi}{b^2}\right\}\\
        &=\min\left\{\dfrac{1}{2(n-1)},\dfrac{\left(\frac{4}{n-1}-\gamma\right)}{4\left(1-\frac{n-2}{n-1}\gamma\right)},\dfrac{\left(\frac{4}{n-1}-\gamma\right)(n-1)}{4\gamma(n-3)^2}\right\}\\
        &\ge\min\left\{\dfrac{1}{2(n-1)},\dfrac{\left(\frac{4}{n-1}-c_{N,n}^{-1}\right)}{4},\dfrac{\left(\frac{4}{n-1}-c_{N,n}^{-1}\right)(n-1)}{4c_{N,n}^{-1}(n-3)^2}\right\}\\
        &=:\tilde{c}(N,n)>0
    \end{split}
\end{equation}
Therefore, we have the following range
\begin{equation}\label{eq:final range of gamma for n neq 3}
    0<\gamma<\frac{1}{c_{N,n}+(n-3)^2c_0}
\end{equation}the upperbound depends on the choice of $c_0$ quantity and $c_0<\tilde{c}_{N,n}$
Therefore, there exists a $d_0=d_0(n,N,c_0,\gamma)>0$ such that \begin{equation}
    p(f_\nu,Y)=-\left(\dfrac{1}{n-1}+\frac{1}{N}\right)\left(f_\nu-\left(\dfrac{1}{n-1}+\dfrac{1}{N}\right)^{-1}\dfrac{\gamma}{n-1}Y\right)^2-d_0 Y^2
\end{equation}Fix the $c_0$ is sufficiently small, we have the following estimate 
\begin{equation}\label{eq: final estimate for normal part}
    \begin{split}
0 \leq \int_{\p^*\Omega}
\bigg[& -\left(\dfrac{1}{n-1}+\frac{1}{N}\right)\left(f_\nu-\left(\dfrac{1}{n-1}+\dfrac{1}{N}\right)^{-1}\dfrac{\gamma}{n-1}Y\right)^2 \\
& -c_0 X^2-d_0Y^2 +|\nabla h|e^{-kf}\bigg]e^{-f}.\\
\le \int_{\p \O}&\Big[-c_0 X^2-d_0Y^2 +|\nabla h|e^{-kf}\bigg]e^{-f}
\end{split}
\end{equation}

\subsubsection{Spectral Component Estimate}
On the other hand, for the spectral term $R$, by (\ref{eq:choice_a_2}), $(\ref{eq:choice_a_3})$ and $||f||_{0}<\infty$. We have\begin{equation}\label{eq: R integral}
   \begin{split}
        \int_{\p^*\O} R\,e^{-f}\dvolg&\le \int_{\p^*\O}(\delta\chi_{B(x,r)}-\mu\chi_{\phi^{-1}([-2q,2q])\setminus B(x,2r)} )\, e^{-f}\dvolg\\
        &\le \delta e^{||f||_{0}}|\p^*\O_\e\cap B(x,r)|-\mu|\p^*\O_\e\cap(\phi^{-1}([-2q,2q]\setminus B(x,2r)))|e^{-||f||_{0}}
   \end{split}
\end{equation}

\subsection{Final Proof}

\begin{proof}
    Back to the inequality (\ref{eq: 2nd variation final form}), combining (\ref{eq: tangential estimate}), (\ref{eq: final estimate for normal part}), (\ref{eq: R integral}). We obtain
\begin{equation}\label{eq:final mu bubble estimate}
    \begin{split}
        0&\le\int_{\p^*\O}(P+Q+R) e^{-f}\dvolg\\
        &\le-c_0\int_{\p^*\O}u^{-2}|\np u|^2 e^{-f}\dvolg-d_0\int_{\p^*\O}u^{-2}u^{2}_\nu e^{-f}\dvolg\\
        &\,\,\,\,\,\,\,+\int_{\p^*\O} \Big[-c_0h^2_\e+|\np h_\e|e^{\frac{2}{n-1}f}\Big]e^{-\frac{n+3}{n-1}f}\dvolg\\
        &\quad\,+ \delta e^{||f||_{0}}|\p^*\O_\e\cap B(x,r)|
      -\mu|\p^*\O_\e\cap(\phi^{-1}([-2q,2q]\setminus B(x,2r)))|e^{-||f||_{0}}\\
      &\le-c_0\int_{\p^*\O}u^{-2}|\np u|^2 e^{-f}\dvolg-d_0\int_{\p^*\O}u^{-2}u^{2}_\nu e^{-f}\dvolg\\
      &\quad - c_0^{-1}\e^2 e^{\frac{n+3}{n-1}||f||_{0}}\big|\p^*\Omega_\e\setminus\phi^{-1}([-q,q])\big|
        + Ce^{\frac{n+3}{n-1}||f||_{0}}c_0^{-1}\e\big|\p^*\Omega_\e\cap\phi^{-1}([-q,q])\big|\\
        &\quad+ \delta e^{||f||_{0}}|\p^*\O_\e\cap B(x,r)|-\mu|\p^*\O_\e\cap(\phi^{-1}([-2q,2q]\setminus B(x,2r)))|e^{-||f||_{0}}\\
    \end{split}
\end{equation}where $C=C(q)$ is the constant defined in Lemma \ref{lemma:mu_bubble_function}(2)\\
Note that $\p^*\O_\e\cap \phi^{-1}([-q,q])\neq \emptyset$. Otherwise, $(\ref{eq:final mu bubble estimate})$ and $B(x,r)\subset \phi^{-1}([-q,q])$ together implies \begin{equation}
    0\le- c_0^{-1}\e^2 e^{\frac{n+3}{n-1}||f||_{0}}\big|\p^*\Omega_\e\setminus\phi^{-1}([-q,q])\big|<0
\end{equation}Next, we claim that $\p^*\Omega_\e\cap B(x,2r)\ne\emptyset$ whenever $\e<C^{-1}c_0\mu e^{-\left(\frac{2n+2}{n-1}\right)||f||_0}$. Indeed, since $\p^*\Omega_\e\cap\phi^{-1}\big([-1,1]\big)\ne\emptyset$, we have $|\p^*\Omega_\e\cap\phi^{-1}\big([-2q,2q]\big)|>0$. Hence if $\p^*\Omega_\e\cap B(x,2r)=\emptyset$, (\ref{eq: 2nd variation final form}) would imply\begin{equation}
\begin{split}
        0&\le Ce^{\frac{n+3}{n-1}||f||_{0}}c_0^{-1}\e\big|\p^*\Omega_\e\cap\phi^{-1}([-q,q])\big|-\mu|\p^*\O_\e\cap(\phi^{-1}([-2q,2q]\setminus B(x,2r)))|e^{-||f||_{0}}\\
        &<0
\end{split}
\end{equation}

Let now $m_0\in \mathbb N$. For every $m\geqslant m_0$, define $\delta_m:=\frac{1}{m}$, and $r_m:=\frac{1}{m}$. If $m_0$ is chosen large enough, according to what said above, for every $\delta_m,r_m$ we can find $a_m,\mu_m>0$ small enough such that \eqref{eq:radii_constraints}, \eqref{eq:choice_a_1}, \eqref{eq:choice_a_2}, and \eqref{eq:choice_a_3} are met. Denote by $u_m=u_0+a_mw_{r_m}$ the perturbed functions. Choosing $\varepsilon_m<\min\{\frac{1}{m},C^{-1}c_0\mu e^{-\left(\frac{2n+2}{n-1}\right)||f||_0}\}$, from what we said above we know that there exists $y_m\in \partial^*\Omega_{\varepsilon_m}\cap B(x,2r_m)$.

We will apply the density lemma \ref{lem:DensityEstimatesConvergence} in this argument. Notice that $y_m\to x$ such that $y_m\in \overline{B(x,r_{m_0})}$, and by $(\ref{eq:choice_a_1})$ we know $\log u_m$ is bounded uniformly in $\overline{B(x_0,r_{m_0})}$ with respect to $m$. Take $A=B(x,r_{m_0})$ such that it is a pre-compact open set and by (\ref{eq:radii_constraints}), we have $\overline{B(x,r_{m_0})}\subset \phi^{-1}((-q/\e_0,q/\e_0))$.

Choosing $m_1>4m_0$ and for any $m>m_1$. Applying the density lemma $\ref{lem:DensityEstimatesConvergence}$ in our case, there exists uniform constant $r_0,C_d\in (0,1)$ depending on $n,\gamma,||\log u||_{L^\infty(\overline{B(x,r_{m_0})})},g|_{\overline{B(x,r_{m_0})}},q,N$ such that for any $\e<\e_0$ there holds
\begin{equation}
    C_d\le\dfrac{|\p^*\Omega_{\e_m}\cap B(y_m,\rho)|}{\rho^{n-1}}\le C^{-1}_d 
\end{equation}for any $y_m\in \overline{B(x,r_{m_0})}\cap\p^*\Omega_\e$ as $B(y_m,2r_m)\subset B(x,r_{m_0})$ with $m>m_1$, and any $\rho<r_{m_0}$

Assume, $|\n u_0|(x)>0$, by $u_0\in C^{2,\a}(M)$, there exists a $\tilde{\delta}=\tilde \delta_x>0$ such that $|\n u_0|(z)>0$ for any $z\in B(x,\tilde{\delta})$. There exists a $m_1>3m_0$ such that for $m>m_1$ , $y_m\in B(x,\tilde\delta)$ and we can find a constant $\rho$ such that $0<\rho<\tilde\delta$ with $B(y_m,\rho/2)\subset B(x,\rho)\subset B(x,\tilde\delta)$. For a sufficiently large $m$, by the density estimate and $(\ref{eq:choice_a_1})$,\begin{equation}
\begin{split}
        \int_{\p^*\O_{\e_m}\cap B(x,\rho)}u^{-2}_m|\n u_m|^2 e^{-f}\dvolg&\ge\int_{\p^*\O_{\e_m}\cap B(y_m,\rho/2)}u^{-2}_m|\n u_m|^2 e^{-f}\dvolg\\
        &\overset{(\ref{eq:choice_a_1})}{\ge}\int_{\p^*\O_{\e_m}\cap B(y_m,\rho/2)}u^{-2}_0(|\n u_0|^2-\beta) e^{-||f||_0}\dvolg\\
\end{split}
\end{equation}where $\beta$ is small enough such that $|\n u_0|^2(z)-\beta>0$ for $z\in B(x,\tilde\delta)$. By compactness, we have $\inf_{z\in B(x,\tilde\delta)}u^{-2}_0(|\n u_0|^2-\beta)>0$. Therefore,we can conclude, there exists a $\eta>0$ \begin{equation}\label{eq:bubble lower bound}
\begin{split}
          \int_{\p^*\O_{\e_m}\cap B(x,\rho)}u^{-2}_m|\n u_m|^2 e^{-f}\dvolg&\ge \inf_{z\in B(x,\tilde\delta)}[u^{-2}_0(|\n u_0|^2-\beta)](z)e^{-||f||_0}|\p^*\O_{\e_m}\cap B(y_m,\rho/2)|\\
          &\hspace{-4mm}\overset{\text{Lem \ref{lem:DensityEstimatesConvergence}}}{\ge}\inf_{z\in B(x,\tilde\delta)}[u^{-2}_0(|\n u_0|^2-\beta)](z)e^{-||f||_0}C_d\rho^{n-1}\\
          &\ge \eta\\
          &>0
\end{split}
\end{equation}

Furthermore, $|\p^*\O_{\e_m}\cap\phi^{-1}([-q,q])|$ has an upper bound independent of $m$. By the minimizing property, we have $\mathcal{P}_{\e_m}(\O_{\e_m})\le \mathcal{P}_{\e_m}(\O_0)$. Furthermore, $\O_{\e_m}\setminus \O_0\subset\phi^{-1}(0,\infty)$ and by construction of $h_{\e_m}$ in Lemma \ref{lemma:mu_bubble_function}, it is non-positive in $\O_{\e_m}\setminus \O_0$,  which obtains \begin{equation}
    \int_{\p^*\O_{\e_m}}u^\gamma_m e^{-f}\le \int_{\p^*\Omega_0}u^\gamma_m e^{-f}+\int(\chi_{\O_{\e_m}}-\chi_{\O_0})
h_{\e_m}u^\gamma_me^{-\left(\frac{n+3}{n+1}\right)f}\le  \int_{\p^*\Omega_0}u^\gamma_m e^{-f}\end{equation}Recall that $\p^*\Omega_0$ is compact, and by (\ref{eq:choice_a_1}), the R.H.S is bounded independent of $m$. Therefore, we have \begin{equation}\label{eq: bubble -q,q estimate}
\begin{split}
         |\p^*\O_{\e_m}\cap \phi^{-1}([-q,q])|\dfrac{1}{2^\gamma}e^{-||f||_0}\inf_{z\in  \phi^{-1}([-q,q]}u^\gamma(z) &\le\int_{\p^*\O_{\e_m}}u^\gamma_m e^{-f}\\
         &\le\int_{\p^*\Omega_0}u^\gamma_m e^{-f}\\
         &\le \int_{\p^*\Omega_0}u^\gamma e^{||f||_0}\\
         &\le e^{||f||_0}|\p^*\O_0|\sup_{z\in \p^*\Omega_0}u^\gamma(z) 
\end{split}
\end{equation}Therefore, there exists some $\tilde C=C(\gamma,||f||_0,|\p^*\Omega|,\inf_{z\in  \phi^{-1}([-q,q]}u^\gamma(z),\sup_{z\in  \phi^{-1}([-q,q]}u^\gamma(z),q)>0$ such that \begin{equation}
    |\p^*\O_{\e_m}\cap \phi^{-1}([-q,q])|\le \tilde C
\end{equation}Similarly, $ |\p^*\O_{\e_m}\cap \phi^{-1}([-q,q])|$ has a uniform upper bound independent of $m$ and \begin{equation}
     |\p^*\O_{\e_m}\cap B(x,r_m)|\le \tilde C
\end{equation}
Recall $(\ref{eq: 2nd variation final form})$, we have \begin{equation}
\begin{split}
        0&\le -c_0\int_{\p^*\O}(u^{-2}|\np u|^2)e^{-f}\dvolg+ Ce^{\frac{n+3}{n-1}||f||_{0}}c_0^{-1}\e\big|\p^*\Omega_\e\cap\phi^{-1}([-q,q])\big|\\
        &\quad+ \delta e^{||f||_{0}}|\p^*\O_\e\cap B(x,r)|
\end{split}
\end{equation}
Combine the estimate above $(\ref{eq:bubble lower bound}),(\ref{eq: bubble -q,q estimate})$, we have \begin{equation}
    \begin{split}
        0<\eta&\le\liminf_{m\to\infty} c_0  \int_{\p^*\O_{\e_m}\cap B(x,\rho)}u^{-2}_m|\n u_m|^2 e^{-f}\dvolg\\
        &\le\liminf_{m\to\infty} Ce^{\frac{n+3}{n-1}||f||_{0}}c_0^{-1}\e_m\big|\p^*\Omega_{\e_m}\cap\phi^{-1}([-q,q])\big|+\liminf_{m\to\infty}\delta_m e^{||f||_{0}}|\p^*\O_{\e_m}\cap B(x,r_m)|\\
        &\le \liminf_{m\to\infty} \tilde C \e_m+\liminf_{m\to\infty}\tilde C \delta_m\\
        &=0
    \end{split}
\end{equation}Providing a contradiction.

For $n= 3$, the range of $\gamma$ is exactly what we want. For $n\neq3$ and $n\ge 2$, the above estimate is true for all $0<c_0<\tilde{c}(N,n)$, we can take the limit $c_0\to 0^+$. Therefore, the range of $\gamma$ in $(\ref{eq:final range of gamma for n neq 3})$ improve to \begin{equation}
        0<\gamma<c_N^{-1}
\end{equation}Therefore, we can conclude that for $n\ge 2$, the range of $\gamma$ is 
 \begin{equation}
    0<\gamma<\left(\frac{1}{(n-1)\left(1 + \frac{n-1}{N}\right)} + \frac{n-1}{4}\right)^{-1}
\end{equation}This prove the theorem.
\end{proof}
\section{Alternative proof for spectral splitting theorem in classical Ricci tensor}\label{sec:Alt proof}
Recall the spectral splitting theorem in \cite{spectral_splitting},\cite{spectral_splitting_2}.\begin{theorem}
    Let $n\geq 2$, $\gamma<\frac{4}{n-1}$, and let $(M^n,g)$ be an $n$-dimensional smooth complete noncompact Riemannian manifold without boundary. Assume that $M$ has at least two ends and satisfies
    \begin{equation}
        \lambda_1(-\gamma\Delta+\Ric)\geq0.
    \end{equation}Here $\Ric:M\to\R$ by $$\mathrm{Ric}(x):=\inf_{\substack{v\in T_xM\\ g(v,v)=1}} \mathrm{Ric}_x(v,v).$$
    Then $\Ric\geq0$ on $M$. In particular, the manifold splits isometrically as $(M,g)\cong(\R\times N,\mathrm{d}t^2+g_N)$ for some compact manifold $N$ with nonnegative Ricci curvature.
\end{theorem}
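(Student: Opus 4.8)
The plan is to derive this as a special case of Theorem~\ref{thm:main_thm}. The key observation is that the classical Ricci lower bound in the spectral sense is recovered from the $N$-Bakry--\'Emery setting by taking $f \equiv 0$ (so that $\Delta_f = \Delta$, $\Ric^N_f = \Ric$, and $\|f\|_0 = 0 < \infty$ trivially), and then letting $N \to +\infty$. First I would fix $n \ge 2$ and $\gamma < \frac{4}{n-1}$ as in the statement. Observe that the constant appearing in Theorem~\ref{thm:main_thm},
\begin{equation*}
    c_{N,n}^{-1} = \left(\frac{1}{(n-1)\left(1 + \frac{n-1}{N}\right)} + \frac{n-1}{4}\right)^{-1},
\end{equation*}
is monotone in $N$: as $N \to +\infty$ the term $\frac{n-1}{N} \to 0$, so $c_{N,n} \to \frac{1}{n-1} + \frac{n-1}{4}$ and hence $c_{N,n}^{-1} \to \left(\frac{1}{n-1} + \frac{n-1}{4}\right)^{-1}$. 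A short computation shows this limiting threshold equals $\frac{4}{n-1}$ precisely when $n-1$ satisfies... actually one checks directly that $\left(\frac{1}{n-1} + \frac{n-1}{4}\right)^{-1} \le \frac{4}{n-1}$ with equality iff $\frac{1}{n-1} = \frac{n-1}{4} - \frac{n-1}{4} = 0$, which never happens for finite $n$; so instead the correct route is: for the given $\gamma < \frac{4}{n-1}$, I would note that the sharp threshold is approached, and since $\gamma < \frac{4}{n-1}$, there is no single finite $N$ that works for all such $\gamma$. Therefore the argument must be run with $N$ chosen depending on $\gamma$, or, better, the $c_0 \to 0^+$ limiting argument at the end of the proof of Theorem~\ref{thm:main_thm} must be combined with an $N \to \infty$ limit.

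Concretely, the steps I would carry out are: (1) Set $f \equiv 0$, so that the hypothesis $\lambda_1(-\gamma\Delta + \Ric) \ge 0$ becomes exactly the hypothesis $\lambda_1(-\gamma\Delta_f + \Ric^N_f) \ge 0$ for every $N$, since $\Ric^N_0 = \Ric$ independently of $N$. (2) Given $\gamma < \frac{4}{n-1}$, choose $N = N(\gamma, n)$ large enough that $\gamma < c_{N,n}^{-1}$; this is possible because $c_{N,n}^{-1} \nearrow \left(\tfrac{1}{n-1}+\tfrac{n-1}{4}\right)^{-1}$ as $N \to \infty$ and this supremum is strictly larger than... here I must double-check the inequality $\left(\frac{1}{n-1}+\frac{n-1}{4}\right)^{-1}$ versus $\frac{4}{n-1}$: since $\frac{1}{n-1} + \frac{n-1}{4} \ge 2\sqrt{\frac{1}{4}} = 1 > \frac{n-1}{4}$ is false in general; rather, one rewrites $\frac{4}{n-1} - \left(\frac{1}{n-1}+\frac{n-1}{4}\right)^{-1}$ and shows it is positive, so in fact $c_{\infty,n}^{-1} < \frac{4}{n-1}$ strictly. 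This means a fixed $N$ does \emph{not} suffice, and the genuine argument is: rerun the entire $\mu$-bubble proof of Theorem~\ref{thm:main_thm} with $f \equiv 0$, whereupon the term $\frac{1}{N}f_\nu^2 = 0$ drops out of the normal component estimate, and the completing-the-square step yields the threshold $\gamma < \frac{1}{\frac{1}{n-1} + \frac{n-1}{4} + (n-3)^2 c_0}$ — but now without the $\frac{1}{N}$ correction, i.e. the effective $c_{N,n}$ is replaced by $\frac{1}{n-1}+\frac{n-1}{4}$ only through the vanishing of the $f_\nu^2$ penalty, which actually \emph{improves} the constant back to $\frac{4}{n-1}$ because with $f_\nu \equiv 0$ the polynomial $p(f_\nu, Y)$ reduces to $-\left[\left(\frac{n-1}{4}\right)\gamma\left(\frac{4}{n-1}-\gamma\right) - (n-3)^2\gamma^2 c_0\right]Y^2$, which is $\le 0$ precisely when $\gamma \le \frac{4}{n-1}$ after sending $c_0 \to 0^+$.

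So the honest plan: (3) Repeat the proof of Theorem~\ref{thm:main_thm} verbatim with $f \equiv 0$. Every step goes through; the tangential estimate \eqref{eq: tangential estimate} is unchanged and still $\le 0$ for $0 < \gamma \le 4$; in the normal estimate all terms involving $f_\nu$ vanish, so after using $|\mathrm{II}|^2 \ge \frac{H^2}{n-1}$, $H = X - \gamma Y$, and the quantitative polynomial estimate Theorem~\ref{thm:polynomial estimate}, one obtains $\int Q \le \int[-c_0 X^2 - d_0 Y^2 + |\nabla h|]$ with $d_0 > 0$ as long as $\gamma < \frac{1}{\frac{n-1}{4} + (n-3)^2 c_0}$ (the $\frac{1}{(n-1)(1+(n-1)/N)}$ term being absent); the spectral estimate \eqref{eq: R integral} is unchanged with $e^{\|f\|_0} = 1$. (4) Run the density-lemma contradiction exactly as in the final proof to conclude $|\nabla u_0|(x) = 0$ for every $x$, hence $u_0$ constant, hence $\Ric \ge 0$. (5) Finally take $c_0 \to 0^+$ to relax the constraint to $\gamma < \frac{4}{n-1}$; here... wait, $\frac{1}{\frac{n-1}{4}} = \frac{4}{n-1}$, so this is exactly the sharp constant, and for $n = 3$ the $(n-3)^2$ term vanishes and no limit is needed. (6) Apply the classical splitting theorem (the $f \equiv 0$ case of Theorem~\ref{thm:splitting theorem BE}, i.e. Cheeger--Gromoll) to get $M \cong \R \times N$; compactness of $N$ follows because $M$ having two ends forces $N$ to be compact (otherwise $\R \times N$ with $N$ noncompact could be split off further or would contradict the two-ends hypothesis, as in \cite{spectral_splitting}). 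The main obstacle, as the Remark in the introduction already flags, is ensuring that the vanishing of $f_\nu$ does not interact badly with the quantitative polynomial estimate — but since $f_\nu \equiv 0$ only removes penalty terms, this direction is strictly easier than the general case, and the only real content is bookkeeping the $c_0 \to 0^+$ limit to recover the sharp $\frac{4}{n-1}$.
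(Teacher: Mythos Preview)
Your eventual approach---rerunning the entire $\mu$-bubble argument of Theorem~\ref{thm:main_thm} with $f\equiv 0$ so that all $f_\nu$ terms vanish, then sending $c_0\to 0^+$---is correct and recovers the sharp threshold $\gamma<\tfrac{4}{n-1}$; this is essentially the original Antonelli--Pozzetta--Xu proof. However, the paper takes a much shorter route that you narrowly missed. You correctly tried to derive the result as a black-box corollary of Theorem~\ref{thm:main_thm} with $f$ constant, but you sent $N\to+\infty$, found the limiting threshold $\left(\tfrac{1}{n-1}+\tfrac{n-1}{4}\right)^{-1}$ is strictly smaller than $\tfrac{4}{n-1}$, and abandoned that route. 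The paper instead sends $N\to 0^+$: since $\tfrac{1}{(n-1)(1+(n-1)/N)}\to 0$ as $N\to 0^+$, one has $c_{N,n}^{-1}\to\tfrac{4}{n-1}$, so for any given $\gamma<\tfrac{4}{n-1}$ there is a small $N>0$ with $\gamma<c_{N,n}^{-1}$, and Theorem~\ref{thm:main_thm} applies directly (with $f$ constant the hypothesis $\lambda_1(-\gamma\Delta+\Ric)\ge 0$ coincides with $\lambda_1(-\gamma\Delta_f+\Ric^N_f)\ge 0$ for \emph{every} $N$). Your approach rederives the estimate from scratch; the paper's approach exploits that the sharp constant is already encoded in Theorem~\ref{thm:main_thm} at the small-$N$ end of the range.
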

\begin{proof}
    By the definition of $N$-Bakry \'Emery Ricci tensor \ref{def:N-BE-Ricci} and $f$-Laplacian, for $f$ is a constant function. It conincide to the classical Ricci tensor and Laplacian for any $N\in \R$\begin{equation}
        \begin{split}
             \Ric^N_f&:=\Ric+\mathrm{Hess}(f)-\dfrac{1}{N}df\otimes df=\Ric\\
             \Delta_f&:=\Delta-\<\n f,\n\cdot\>=\Delta
        \end{split}
    \end{equation}By Theorem \ref{thm:main_thm}, we have for $n\ge 2$ and if \begin{equation}
        \gamma< \left(\frac{1}{(n-1)\left(1 + \frac{n-1}{N}\right)} + \frac{n-1}{4}\right)^{-1}\tab\text{and}\tab \lambda_1(-\gamma\Delta_f-\Ric^N_f)\ge 0
    \end{equation}Then $\Ric^N_f\ge 0$ on $M$.\\
    Pick $f\equiv 1$, we have for any $N\in(0,\infty)$ if \begin{equation}
        \gamma< \left(\frac{1}{(n-1)\left(1 + \frac{n-1}{N}\right)} + \frac{n-1}{4}\right)^{-1}\tab\text{and}\tab \lambda_1(-\gamma\Delta-\Ric)\ge 0
    \end{equation}Then $\Ric\ge 0$ on $M$. Since the range is for any $N\in(0,\infty)$, we can take $N\to 0^+$ and obtain $\gamma<\frac{4}{n-1}$. This recover the sharp constant $\gamma$ in the spectral splitting theorem  \cite{spectral_splitting}\cite{spectral_splitting_2}.
\end{proof}
\bibliographystyle{alpha}
\bibliography{splitting.bib}
\end{document}